\documentclass{amsart}
\usepackage{graphicx,hyperref}
\usepackage{verbatim}
\usepackage{tikz}
\usepackage{url}

\DeclareMathOperator{\Var}{Var}
\DeclareMathOperator{\vectorize}{vec}

\DeclareMathOperator{\Cov}{Cov}

\DeclareMathOperator{\tr}{tr}
\DeclareMathOperator{\diag}{diag}
\newcommand{\vect}[1]{\boldsymbol{#1}}
\renewcommand{\vec}{\vect}

\usepackage{caption}
\newtheorem{theorem}{Theorem}[section]
\newtheorem{lemma}{Lemma}[section]
\newtheorem{step}{Step}[section]
\newtheorem{corollary}{Corollary}[section]
\newtheorem{proposition}{Proposition}[section]

\theoremstyle{definition}
\newtheorem{definition}{Definition}[section]

\theoremstyle{remark}
\newtheorem{remark}{Remark}[section]

\title[Optimal risk-sharing in Decentralized Insurance]{Optimal Risk-Sharing Rules in Network-based Decentralized Insurance}

\author[H.N. Fogarty]{Heather N. Fogarty}
\email{fogartyh@oregonstate.edu}
\address{Department of Mathematics, Oregon State University}

\author[S-H. Loke]{Sooie-Hoe Loke}
\email{sooiehoe.loke@mtsu.edu}
\address{Department of Mathematical Sciences, Middle Tennessee State University}

\author[N.F. Marshall]{Nicholas F. Marshall}
\email{marsnich@oregonstate.edu}
\address{Department of Mathematics, Oregon State University}

\author[E.A. Thomann]{Enrique A. Thomann}
\email{thomann@math.oregonstate.edu}
\address{Department of Mathematics, Oregon State University}
\keywords{Decentralized insurance, risk-sharing, peer-to-peer insurance, graph Laplacian}
\begin{document}
\begin{abstract}
This paper studies decentralized risk-sharing on networks. In particular, we consider a model where agents are nodes in a given network structure. Agents directly connected by edges in the network are referred to as friends. We study actuarially fair risk-sharing under the assumption that only friends can share risk, and we characterize the optimal signed linear risk-sharing rule in this network setting. Subsequently, we consider a special case of this model where all the friends of an agent take on an equal share of the agent's risk, and establish a connection to the graph Laplacian. Our results are illustrated with several examples.
\end{abstract}

\maketitle

\section{Introduction}

Risk-sharing has been extensively studied in actuarial science.
Early works on the mathematical analysis of risk-sharing include work in the 1960s by Borch \cite{borch1960attempt,borch1962equilibrium,borch1968general}, who coined the term non-olet risk-sharing, where a central authority pools the agents' risks and then redistributes them without taking into account the origins of the risks in the pool. From a mathematical perspective, this means that while the risk-sharing rule of each agent may depend on individual parameters, it is only a function of the aggregate losses.

Recently, decentralized insurance models have attracted considerable attention in the actuarial research community. In these models, agents share risk among each other with limited or no role for a central authority; see \cite{feng2023decentralized,Feng2024} for a systematic mathematical treatment of decentralized insurance. While technological, economic, and social developments have created a renewed interest in decentralized insurance models, there already exist historical examples of such risk-sharing schemes. A prominent example is Takaful, which is an Islamic-compliant form of insurance based on mutual assistance, where participants contribute to a common pool to cover losses, rather than transferring risk to an insurer; see \cite{malik2019introduction}. Decentralized insurance models have also found a variety of modern applications, including cyber insurance contracts \cite{fahrenwaldt2018pricing}, cooperative insurance \cite{clemente2024risk}, and, recently, have been used by governments to manage catastrophic risks, especially related to climate change and extreme weather events \cite{bollmann2019international}.

In this paper, we are specifically interested in peer-to-peer (P2P) insurance, which is a model where agents directly exchange risk. The mathematical foundations of P2P insurance are an active area of research, with common topics of examination including optimization conditions \cite{abdikerimova2024multiperiod,denuit2012convex,denuit2025comonotonicity, yang2024optimality}, development of new risk-sharing models \cite{boonen2025peer,denuit2022,feng2023peer,levantesi2022mutual}, and analysis of existing risk-sharing models \cite{abdikerimova2022peer,denuit2020investing,denuit2021risk, denuit2021risk2}.  
A key difference between centralized insurance models and P2P models is that P2P insurance enables arrangements where participants do not share risk with every other agent, which can be modeled using a network whose nodes are agents and whose edges represent potential risk-sharing relationships. From the network theory perspective, non-olet risk-sharing corresponds to a star graph, where the central node is the central authority, while unrestricted P2P risk-sharing corresponds to a complete graph,  
 see Figure \ref{figgraphs}. 

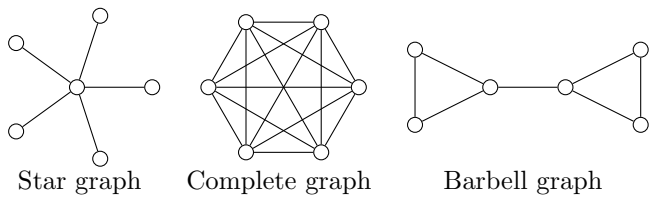
\begin{figure}[h!]
\centering
\begin{tabular}{ccc}
\raisebox{-.5\height}{
\begin{tikzpicture}[scale=1, every node/.style={circle, draw, fill=white, inner sep=2pt}]
  \node (c) at (0,0) {};

  \foreach \i in {0,...,4} {
    \node (v\i) at ({cos(72*\i)}, {sin(72*\i)}) {};
    \draw (c) -- (v\i);
  }
  
\end{tikzpicture}}
&
\raisebox{-.5\height}{
\begin{tikzpicture}[scale=1, every node/.style={circle, draw, fill=white, inner sep=2pt}]
  \foreach \i in {1,...,6} {
    \node (v\i) at ({cos(60*\i)}, {sin(60*\i)}) {};
  }

  \foreach \i in {1,...,6} {
    \foreach \j in {1,...,6} {
      \ifnum\i<\j
        \draw (v\i) -- (v\j);
      \fi
    }
  }
\end{tikzpicture}}
&
\raisebox{-.5\height}{
\begin{tikzpicture}[scale=1, every node/.style={circle, draw, fill=white, inner sep=2pt}]
  \node (a1) at (0,0) {};
  \node (a2) at (0,1) {};
  \node (a3) at (1,0.5) {};
  \draw (a1) -- (a2) -- (a3) -- (a1);

  \node (b1) at (3,0) {};
  \node (b2) at (3,1) {};
  \node (b3) at (2,0.5) {};
  \draw (b1) -- (b2) -- (b3) -- (b1);

  \draw (a3) -- (b3);
\end{tikzpicture}} \\
Star graph & Complete graph & Barbell graph
\end{tabular}
\caption{Many works on  P2P insurance either perform non-olet risk pooling (represented by the star graph) or unrestricted risk-sharing (represented by the complete graph). In this work, we consider networks with general structures such as the Barbell graph.} \label{figgraphs}
\end{figure}

While a substantial body of work on P2P insurance has emerged, particularly over the past decade, the theoretical foundations of P2P risk-sharing on networks remain underdeveloped; notable examples include
\cite{charpentier2021}, which examines decentralized insurance on a random network,
and \cite{charpentier2025linear} that considers optimal row and column stochastic risk-sharing rules on networks.

In this paper, we study optimal risk-sharing on networks inspired by \cite{feng2023peer}, which considers optimal unrestricted (signed) linear risk-sharing, which corresponds to risk-sharing on a complete graph. 
 In particular, \cite{feng2023peer} characterizes the linear actuarially fair risk-sharing rule that is optimal in the sense that it minimizes the sum of variances of each agent's loss after risk-sharing. Subsequently, \cite{yang2024optimality} proves that, among all risk-sharing rules, the optimal risk-sharing rule is an affine function of the 
residual risks (formed by subtracting the mean from each loss random variable).
  Recently, \cite{niakh2025fixed} shows that actuarially fair Pareto-optimal risk-sharing rules are in one-to-one correspondence with the fixed points of a specific function. 
  Some of the aforementioned works have been extended to multi-period risk-sharing \cite{abdikerimova2024multiperiod} and a continuous time setting
\cite{boonen2025robust}.

\subsection{Main contributions}
In this paper, we consider a variance minimization problem for a network-based, actuarially fair, linear risk-sharing rule, which can be applied to any connected network. In this context, the main contributions of this paper are as follows: (1) We characterize the optimal (signed) allocation, in Theorem \ref{MainResults}, for connected networks, extending results in \cite{feng2023peer} which are only valid for complete graphs; (2) We establish a novel connection to the graph Laplacian in the special case that risk is proportionally shared among agents with a common node, as demonstrated in  Theorem \ref{thmoptshare}; and (3) We obtain necessary and sufficient conditions for the nonnegativity of risk allocation for complete graphs (Proposition \ref{lem:gencase}), and for risk-sharing rules modeled using the graph Laplacian, (Lemma \ref{FriendsEqualPos}). 

\subsection{Preliminaries} \label{sec:prelmin}
In this section, we give a precise mathematical definition of risk-sharing. 
Let $\vec X = (X_1,\ldots, X_n)^\top$ be a nonnegative random vector whose $i$-th entry $X_i$ represents the loss of agent $i$. Let $\vec\mu  = \mathbb{E} [\vec X]$ denote the mean of $\vec{X}$ and 
$$\vec\Sigma=
\Var(\vec X) := \mathbb{E} \left[ (\vec X - \mathbb{E} [\vec X]) (\vec X - \mathbb{E} [\vec X])^\top \right],
$$
denote the covariance matrix of $\vec{X}$. Throughout this paper, we assume that $\vec{\Sigma}$ is  positive definite and therefore invertible. 
The assumption that the covariance matrix $\vec \Sigma$ is positive definite implies that each entry of $\vec X$ has strictly positive variance. Since $\vec{X}$ is nonnegative by assumption, the fact that its entries have positive variance implies that each entry of $\vec{X}$ is positive with nonzero probability. Thus, the mean vector $\vec \mu$ has strictly positive entries.

A \textit{risk-sharing rule} $H$ is a function $H: \mathbb{R}^n \rightarrow \mathbb{R}^n$ that satisfies the following full-allocation property.

\begin{definition}
\label{fullallocation}
A function $H: \mathbb{R}^n\to \mathbb{R}^n$ satisfies the full-allocation property if
$$
\sum_{i=1}^n H_i(\vec{X}) = \sum_{i=1}^n X_i,
$$
almost surely, where $H_i(\vec{X})$ denotes the $i$-th entry of $H(\vec{X})$.
\end{definition}

An example of a risk-sharing rule is the linear rule $H : \mathbb{R}^n \to \mathbb{R}^n$ by
\begin{equation} \label{eq:linear}
H(\vec{X}) = \vec{AX},
\end{equation}
where $\vec{A}=(a_{ij}) \in \mathbb{R}^{n \times n}$ is any matrix whose columns sum to $1$, that is, $\vec{1}^\top \vec{A} = \vec{1}^\top$, where $\vec{1}$ is a column vector of ones.

Note that some papers require risk-sharing functions to be nonnegative, which corresponds to the property that agents cannot profit from the loss of another agent. Under such an assumption, the matrix $\vec{A}$ in \eqref{eq:linear} would be restricted to a column stochastic matrix. In this paper, 
we start by making no nonnegativity assumption,
which allows us to characterize risk-sharing allocations that take negative values. Practically speaking, a risk-sharing scheme involving signed exchanges is analogous to financial models that allow agents to buy and short-sell financial instruments. 
Conditions for achieving nonnegative risk-sharing rules are discussed in \S \ref{sec:nonneg}.

As customary, we also impose that the risk-sharing rules are \emph{actuarially fair} according to the following definition.

\begin{definition} 
\label{defactfair}
A risk-sharing rule $H: \mathbb{R}^n\to\mathbb{R}^n$ is actuarially fair if
$$
\mathbb{E} [H(\vec X)] = \mathbb{E}[ \vec X ].
$$
\end{definition}

\subsection{Prior work} \label{sec:priorwork}
Recall that $\vec\mu=\mathbb{E}[\vec X]$ is the mean vector and $\vec\Sigma = \Var(\vec X)$ is the covariance matrix of $\vec X$. Feng, Liu, and Taylor \cite{feng2023peer} consider the following optimization problem  for a linear risk-sharing rule $H(\vec{X}) = \vec{A} \vec{X}$:
\begin{equation} \label{eq:optproblemlin}
\begin{cases}
\text{minimize} &  \frac{1}{2} \tr( \vec{A} \vec{\Sigma} \vec{A}^\top)\\
\text{subject to} & \vec{1}^\top \vec A=\vec{1}^\top, \quad
\vec{A} \vec{\mu}= \vec\mu,
\end{cases}
\end{equation}
where the optimization is taken over $\vec{A} \in \mathbb{R}^{n \times n}$.
The objective function in this optimization problem is half the sum of the variances of the agents' losses after risk-sharing, as  given by
$$
\frac{1}{2} \tr( \vec{A} \vec{\Sigma} \vec{A}^\top) =
\frac{1}{2}\tr\Var(\vec{AX}) = \frac{1}{2}\sum_{i=1}^n\Var(H_i(\vec X)).
$$
The first constraint $\vec1^\top\vec A = \vec1^\top$ enforces full-allocation (Definition \ref{fullallocation}), and the second constraint  $\vec{A} \vec{\mu} =\vec\mu$ ensures actuarial fairness (Definition \ref{defactfair}).
We emphasize again that $H(\vec{X}) = \vec{A} \vec{X}$ is allowed to be a signed risk-sharing rule  and  $\vec{A}$ may have some negative entries,  see \S \ref{examplenegative}
for an example where negative entries arise.  The following result characterizes the solution to the optimization problem \eqref{eq:optproblemlin}.

\begin{theorem}[Feng, Liu, Taylor \cite{feng2023peer}]
\label{thm:feng}
The optimization problem \eqref{eq:optproblemlin} has a unique solution 
\begin{equation} \label{fang2023eq}
 \vec{A}_* =\frac{1}{n} \vec{1} \vec{1}^\top +\frac{1}{a}\left(\vec I- \frac{1}{n} \vec{1} \vec{1}^\top\right)\vec{\mu \mu}^\top \vec\Sigma^{-1},
\end{equation}
where $a = \vec\mu^\top \vec\Sigma^{-1}\vec\mu$.
\end{theorem}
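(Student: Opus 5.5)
The plan is to treat \eqref{eq:optproblemlin} as a strictly convex quadratic program with linear equality constraints and to solve it with Lagrange multipliers. Since $\vec\Sigma$ is positive definite, the objective $\frac12\tr(\vec A\vec\Sigma\vec A^\top)=\frac12\sum_i \vec a_i^\top\vec\Sigma\vec a_i$ is strictly convex in $\vec A$. Here $\vec a_i^\top$ denotes the $i$-th row of $\vec A$. The feasible set is an affine subspace, and it is nonempty because $\vec I$ is feasible.

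It follows that a minimizer exists and is unique. Moreover, the KKT (first-order) conditions are both necessary and sufficient. So it suffices to exhibit one $\vec A$ that satisfies the constraints together with the stationarity condition.

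Introduce multipliers $\vec\lambda\in\mathbb R^n$ for $\vec1^\top\vec A=\vec1^\top$ and $\vec\nu\in\mathbb R^n$ for $\vec A\vec\mu=\vec\mu$. Form the Lagrangian
$$\frac12\tr(\vec A\vec\Sigma\vec A^\top)-\vec\lambda^\top(\vec A^\top\vec1-\vec1)-\vec\nu^\top(\vec A\vec\mu-\vec\mu).$$
Differentiating in $\vec A$ gives the stationarity condition $\vec A\vec\Sigma=\vec1\vec\lambda^\top+\vec\nu\vec\mu^\top$, that is,
$$\vec A=\vec1\vec\lambda^\top\vec\Sigma^{-1}+\vec\nu\vec\mu^\top\vec\Sigma^{-1}.$$

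Next, substitute this form into the two constraints and solve for $\vec\lambda$ and $\vec\nu$.

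From $\vec A\vec\mu=\vec\mu$ we get
$$\vec1(\vec\lambda^\top\vec\Sigma^{-1}\vec\mu)+a\vec\nu=\vec\mu, \qquad\text{so}\qquad \vec\nu=\tfrac1a(\vec\mu-c\vec1),$$
where $c=\vec\lambda^\top\vec\Sigma^{-1}\vec\mu$ and $a=\vec\mu^\top\vec\Sigma^{-1}\vec\mu$.

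From $\vec1^\top\vec A=\vec1^\top$ we get
$$n\vec\lambda^\top\vec\Sigma^{-1}+(\vec1^\top\vec\nu)\vec\mu^\top\vec\Sigma^{-1}=\vec1^\top,$$
hence $\vec\lambda=\frac1n(\vec\Sigma\vec1-(\vec1^\top\vec\nu)\vec\mu)$.

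Then $\vec1\vec\lambda^\top\vec\Sigma^{-1}=\frac1n\vec1\vec1^\top-\frac{\vec1^\top\vec\nu}{n}\vec1\vec\mu^\top\vec\Sigma^{-1}$. Adding $\vec\nu\vec\mu^\top\vec\Sigma^{-1}$ gives
$$\vec A=\frac1n\vec1\vec1^\top+\Bigl(\vec I-\frac1n\vec1\vec1^\top\Bigr)\vec\nu\,\vec\mu^\top\vec\Sigma^{-1}.$$
Since $(\vec I-\frac1n\vec1\vec1^\top)\vec1=0$, the $c\vec1$ part of $\vec\nu$ drops out, leaving $\vec\nu\mapsto\frac1a\vec\mu$. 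This yields exactly \eqref{fang2023eq}. Because the unknown scalar $c$ cancels, the coupled system never has to be solved explicitly.

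Finally, check directly that $\vec A_*$ is feasible and stationary.
\begin{itemize}
\item Full allocation: $\vec1^\top\vec A_*=\vec1^\top$, because $\vec1^\top(\vec I-\frac1n\vec1\vec1^\top)=0$.
\item Actuarial fairness: $\vec A_*\vec\mu=\frac1n\vec1(\vec1^\top\vec\mu)+(\vec I-\frac1n\vec1\vec1^\top)\vec\mu=\vec\mu$, using $\vec\mu^\top\vec\Sigma^{-1}\vec\mu=a$.
\item Stationarity: $\vec A_*\vec\Sigma=\frac1n\vec1\vec1^\top\vec\Sigma+\frac1a\vec\mu\vec\mu^\top-\frac{1}{na}\vec1(\vec1^\top\vec\mu)\vec\mu^\top$, which has the form $\vec1\vec\lambda^\top+\vec\nu\vec\mu^\top$.
\end{itemize}
Sufficiency of the KKT conditions under convexity, together with strict convexity, then gives optimality and uniqueness.

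An alternative route to uniqueness avoids KKT theory. For any feasible $\vec A$, put $\vec D=\vec A-\vec A_*$, which satisfies $\vec1^\top\vec D=0$ and $\vec D\vec\mu=0$. Expanding the objective gives
$$\tr(\vec A\vec\Sigma\vec A^\top)=\tr(\vec A_*\vec\Sigma\vec A_*^\top)+2\tr(\vec D\vec\Sigma\vec A_*^\top)+\tr(\vec D\vec\Sigma\vec D^\top).$$
The cross term vanishes by stationarity: $\tr(\vec D(\vec1\vec\lambda^\top+\vec\nu\vec\mu^\top)^\top)=\vec\lambda^\top\vec D^\top\vec1+\vec\nu^\top\vec D\vec\mu=0$. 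The last term is positive unless $\vec D=0$.

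The main obstacle is the elimination of the multipliers. The cleanest route is the one above: isolate the projection $\vec I-\frac1n\vec1\vec1^\top$ so that the unknown scalar $c$ cancels and the system never has to be solved in full. The only other place needing care is the sign and transpose bookkeeping in the matrix derivative.
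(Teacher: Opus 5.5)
Your proof is correct and follows essentially the paper's route. The paper defers to the Lagrange-multiplier argument of Feng, Liu, and Taylor, and it reappears as the $\vec\Gamma=\vec 0$ case of Steps 2--3 in the proof of Theorem \ref{MainResults}: strict convexity from positive definiteness of $\vec\Sigma$ gives uniqueness, stationarity gives $\vec A\vec\Sigma=\vec 1\vec\lambda^\top+\vec\beta\vec\mu^\top$, and eliminating the multipliers leaves a $c\vec 1$ component that is annihilated by $\vec I-\frac1n\vec1\vec1^\top$ (you work at the matrix level rather than vectorizing, and add an explicit feasibility/stationarity check plus the $\vec D=\vec A-\vec A_*$ completion-of-square argument, which make optimality and uniqueness self-contained instead of cited).
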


The proof of this result is based on Lagrange multipliers, see \cite{feng2023peer} for details.  In this paper, we consider a generalization of the optimization problem \eqref{eq:optproblemlin} that allows for network-based constraints.

\section{Main results}
\subsection{Only friends share risk} \label{sec:mainresult}
As above, let $\vec{X}$ be an $n$-dimensional loss vector with mean $\vec{\mu}$ and covariance $\vec{\Sigma}$. Assume that there is an underlying simple undirected graph $G = (V,E)$ whose vertices $V = \{1,\ldots,n\}$ correspond to the agents, and whose edge set $E$, which consists of unordered pairs of distinct vertices $\{i,j\}$, represents friendships between the agents.  Let $d_i = \#\{ j \in V : \{i,j\} \in E \}$ denote the degree of vertex $i$. Let
$$
\overline{E} = \{ \{i,j\} : i,j \in V,\, i \ne j,\, \{i,j\} \not \in E\}
$$
denote the edge set of the complement graph of $G$. That is, $\overline{E}$ consists of all unordered pairs of distinct vertices that are not edges of $G$. We emphasize that both $E$ and $\overline{E}$ consist of unordered pairs of \emph{distinct} vertices, so $\{i,i\} \not \in E$ and $\{i,i\} \not \in \overline{E}$ for all $i$.

Throughout this paper, we assume that the number of agents $n \ge 2$ to avoid a trivial situation.  In this paper, we consider the following network-based constraint.

\begin{definition}[Only friends share risk]  \label{assumption1}
Let $H: \mathbb{R}^n \to \mathbb{R}^n$ be a risk-sharing rule. We say that $H$ has the property that only friends share risk if, for each fixed $i \in \{1,\ldots,n\}$, we have
$$
H_i(\vec x) = f_i\left(x_i,x_{j_1},\ldots,x_{j_{d_i}} \right),
$$
for some function $f_i : \mathbb{R}^{d_i+1} \to \mathbb{R}$, where $\vec{x} = (x_1,\ldots,x_n)$ and $j_1,\ldots,j_{d_i}$ is an enumeration of the vertices $\{j \in V : \{i,j\} \in E \}$.
\end{definition}

We consider the following optimization problem for a linear risk-sharing rule $H(\vec{X}) = \vec{A} \vec{X}$  where only friends share risk:
\begin{equation}
\label{opteq}
\begin{cases}
\text{minimize} & \frac{1}{2}\tr(\vec{A} \vec{\Sigma} \vec{A}^\top) \\
\text{subject to} &  \vec{1}^\top \vec{A} = \vec{1}^\top, \quad  \vec{A} \vec{\mu}  = \vec{\mu} , \quad 
a_{ij} = 0 \text{ whenever } \{i,j\} \in \overline{E},
\end{cases}
\end{equation}
where the optimization is taken over the matrix $\vec{A} = (a_{ij}) \in \mathbb{R}^{n \times n}$. As in 
\S \ref{sec:priorwork}, the objective function of the optimization is half the sum of the variances of the losses after risk-sharing, the constraint $\vec{1}^\top \vec{A} = \vec{1}^\top$ enforces full allocation (Definition 
\ref{fullallocation}), and the constraint $\vec{A} \vec{\mu}= \vec{\mu}$ enforces actuarial fairness (Definition \ref{defactfair}). The final constraint,
 $a_{ij} =0$ whenever $\{i,j\} \in \overline{E}$, enforces that only friends share risk (Definition \ref{assumption1}). The following result characterizes the solution to this optimization problem.

\begin{theorem}[Only friends share risk]\label{MainResults}
The optimization problem \eqref{opteq} has a unique solution
\begin{equation}
\label{optimalrestricted}
 \vec A_* = \frac{1}{n} \vec{1 1}^\top  + \left(\vec I - \frac{1}{n} \vec{1 1}^\top \right)
\left( \frac{1}{a} \vec{\mu \mu}^\top + \vec{\Gamma} \left(
\frac{1}{a} \vec\Sigma^{-1}\vec{\mu \mu}^\top
 - \vec I \right) \right) \vec\Sigma^{-1},
\end{equation} 
where $a = \vec\mu^\top \vec\Sigma^{-1}\vec\mu$ and $\vec{\Gamma} = (\gamma_{ij}) \in \mathbb{R}^{n \times n}$ satisfies $\gamma_{ij} = 0$ when $i=j$ or $\{i,j\} \in E$, and the other entries $\gamma_{ij}$ 
are chosen so that the equations
\begin{equation} \label{eq:linearsystem1}
 \left( 
  \frac{1}{n} \vec{1 1}^\top   +
 \left(\vec I - \frac{1}{n} \vec{1 1}^\top \right)
 \left(
   \frac{1}{a} \vec{\mu \mu}^\top +
 \vec{\Gamma}
\left(\frac{1}{a} \vec\Sigma^{-1}\vec{\mu \mu}^\top
 - \vec I \right)\right)   \vec\Sigma^{-1} 
 \right)_{ij}
 = 0
 \end{equation}
are satisfied for all $i,j$ such that $\{i,j\} \in \overline{E}$. Such a matrix $\vec{\Gamma}$ always exists but need not be unique.

\end{theorem}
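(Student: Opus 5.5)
The plan is to treat \eqref{opteq} as a strictly convex quadratic program with linear equality constraints, derive its Lagrange (KKT) conditions, and then solve these conditions explicitly. This mirrors the proof of Theorem \ref{thm:feng}, with one extra multiplier for each forbidden entry.

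\emph{Existence and uniqueness.} The feasible set is a nonempty affine subspace, since $\vec A=\vec I$ satisfies all constraints. The objective $\frac12\tr(\vec A\vec\Sigma\vec A^\top)=\frac12\|\vec A\vec\Sigma^{1/2}\|_F^2$ is strictly convex in $\vec A$ because $\vec\Sigma$ is positive definite, and it is coercive. Hence a unique minimizer $\vec A_*$ exists. Because all constraints are linear, no constraint qualification is needed. The KKT conditions are therefore necessary, and by convexity they are also sufficient.

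\emph{Stationarity.} Introduce multipliers $\vec\lambda\in\mathbb R^n$ for $\vec 1^\top\vec A=\vec 1^\top$ and $\vec\nu\in\mathbb R^n$ for $\vec A\vec\mu=\vec\mu$. For each ordered pair $(i,j)$ with $\{i,j\}\in\overline E$, introduce a multiplier $m_{ij}$ for $a_{ij}=0$. Collect the latter in a matrix $\vec M$ that is supported on off-diagonal non-edges. The gradient condition reads
$$
\vec A\vec\Sigma=\vec 1\vec\lambda^\top+\vec\nu\vec\mu^\top+\vec M,
\qquad\text{so}\qquad
\vec A=(\vec 1\vec\lambda^\top+\vec\nu\vec\mu^\top+\vec M)\vec\Sigma^{-1}.
$$

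\emph{Eliminating $\vec\lambda$.} Multiplying on the left by $\vec 1^\top$ and using full allocation gives
$$
\vec\lambda^\top=\tfrac1n\bigl(\vec 1^\top\vec\Sigma-(\vec 1^\top\vec\nu)\vec\mu^\top-\vec 1^\top\vec M\bigr).
$$
Substituting back and writing $\vec P=\vec I-\frac1n\vec{11}^\top$, we get
$$
\vec A=\tfrac1n\vec{11}^\top+\vec P(\vec\nu\vec\mu^\top+\vec M)\vec\Sigma^{-1}.
$$

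\emph{Eliminating $\vec\nu$.} Imposing $\vec A\vec\mu=\vec\mu$ and noting $\tfrac1n\vec{11}^\top\vec\mu+\vec P\vec\mu=\vec\mu$ yields
$$
\vec P\bigl(a\vec\nu+\vec M\vec\Sigma^{-1}\vec\mu-\vec\mu\bigr)=0 .
$$
Since $\vec P$ only annihilates multiples of $\vec 1$, which $\vec P$ kills anyway, we may take $\vec\nu=(\vec\mu-\vec M\vec\Sigma^{-1}\vec\mu)/a$ without loss of generality. Then
$$
\vec\nu\vec\mu^\top+\vec M=\tfrac1a\vec{\mu\mu}^\top-\vec M\bigl(\tfrac1a\vec\Sigma^{-1}\vec{\mu\mu}^\top-\vec I\bigr).
$$
Setting $\vec\Gamma=-\vec M$ gives exactly \eqref{optimalrestricted}, and $\vec\Gamma$ has the stated support. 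The remaining primal constraints $a_{ij}=0$ for $\{i,j\}\in\overline E$ are precisely \eqref{eq:linearsystem1}. Existence of such a $\vec\Gamma$ then follows from the existence of KKT multipliers at the minimizer $\vec A_*$.

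\emph{Converse and uniqueness of $\vec A_*$.} Conversely, suppose $\vec\Gamma$ has this support and satisfies \eqref{eq:linearsystem1}. Then the resulting $\vec A$ is feasible. Indeed, $\vec 1^\top\vec P=0$ gives full allocation. Moreover,
$$
\bigl(\tfrac1a\vec\Sigma^{-1}\vec{\mu\mu}^\top-\vec I\bigr)\vec\Sigma^{-1}\vec\mu=0,
$$
so $\vec A\vec\mu=\tfrac1n\vec{11}^\top\vec\mu+\vec P\vec\mu=\vec\mu$. This $\vec A$ also satisfies stationarity by construction. So for any feasible $\vec B$, the quantity $\tr((\vec B-\vec A)\vec\Sigma\vec A^\top)$ vanishes, because $\vec B-\vec A$ is orthogonal to each multiplier term. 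Convexity then shows $\vec A$ is the minimizer, so every admissible $\vec\Gamma$ produces the same $\vec A_*$ even though $\vec\Gamma$ itself may not be unique.

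The main obstacle is bookkeeping rather than analysis. The multipliers for the zero pattern must be tracked correctly: one per ordered pair $(i,j)$, since $a_{ij}$ and $a_{ji}$ are separate constraints. The affine freedom in $\vec\nu$ must also be handled so that the algebra collapses to the single matrix $\vec\Gamma$ in the stated form.
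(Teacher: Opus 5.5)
Your proposal is correct and follows the paper's proof in essence. Both write stationarity as $\vec A\vec\Sigma = \vec 1\vec\lambda^\top + \vec\nu\vec\mu^\top + \vec M$ (the paper's $\vec\Gamma$ is your $-\vec M$), eliminate the full-allocation multiplier and then the fairness multiplier up to a multiple of $\vec 1$, and read the zero-pattern constraints as the linear system. The differences are minor and in your favor: you work in matrix form rather than vectorizing via $\vec\Sigma\otimes\vec I$, you justify existence of a minimizer by coercivity where the paper only notes that the feasible set is nonempty, and you verify the converse (every admissible $\vec\Gamma$ gives a feasible stationary point, hence the same $\vec A_*$), which the paper only asserts.
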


The proof of Theorem \ref{MainResults} is given in \S \ref{sec:proofmainresult}. Several remarks are in order.

\begin{remark}
We emphasize that the matrix $\vec{\Gamma}$ in the statement of
 Theorem \ref{MainResults}
need not be unique if the linear system
of equations \eqref{eq:linearsystem1} is rank deficient. However, a solution is guaranteed to exist, and substituting any solution $\vec{\Gamma}$ into \eqref{optimalrestricted} will result in the same unique matrix $\vec{A}_*$. 
\end{remark}

\begin{remark} 
The linear system of equations defined by \eqref{eq:linearsystem1} states that the
$(i,j)$-th entry of a matrix equals $0$ for all $i,j$ such that $\{i,j\} \in \overline{E}$. Since $\{i,j\}$ 
denotes an unordered pair, this means that both the $(i,j)$-th and $(j,i)$-th entries of the final $\vec{A}_*$ equal $0$ whenever $\{i,j\} \in \overline{E}$.
Similarly, the collection of unknowns in the system of equations includes both $\gamma_{ij}$ and $\gamma_{ji}$ whenever $\{i,j\} \in \overline{E}$.

So for example, if $\overline{E} = \{\{2,4\}\}$, then \eqref{eq:linearsystem1} is a linear system consisting of two equations corresponding to $(i,j) = (2,4)$ and $(i,j) = (4,2)$ with two unknowns $\gamma_{2,4}$ and $\gamma_{4,2}$.  
\end{remark}

\begin{remark}
\label{extension}
Note that Theorem \ref{MainResults} is an extension of Theorem \ref{thm:feng}.  Indeed, in the case of a complete graph, \eqref{eq:linearsystem1} is a vacuous requirement and $\vec{\Gamma}=\vec{0}$.
Thus $\vec{A}_*$ defined in \eqref{optimalrestricted} agrees with $\vec{A}_*$ defined in \eqref{fang2023eq}.
\end{remark}

\begin{remark}
The optimization problems considered in this paper are formulated from the perspective of a social planner seeking to reduce the aggregate variance of the post-sharing losses subject to actuarial fairness and network constraints. While this objective reduces the overall risk in the system, it does not necessarily reduce the risk of every individual agent. Thus, our framework does not impose individual rationality constraints. Studying network risk sharing rules that explicitly account for individual risk and incentives to join is an interesting direction for future work; see Section \ref{sec:discussion}.
\end{remark}

 Additionally, see Remark \ref{rmk:computingAstart} for a discussion about the computation of $\vec{\Gamma}$.

\subsection{Examples of Theorem \ref{MainResults}} \label{sec:examples1}
In the following, we illustrate Theorem \ref{MainResults} through several examples.
In each example, for a given mean vector $\vec{\mu}$, covariance matrix $\vec{\Sigma}$, and graph $G= (V,E)$, we use Theorem \ref{MainResults} to determine the unique $\vec{A}_*$ that solves the optimization problem \eqref{opteq}, and report the value of the objective function for the optimal $\vec{A}_*$.

\subsubsection{Complete graph} \label{sec:completegraph}
To establish a basis of comparison, we start by considering a complete graph with uncorrelated unit mean and variance losses. Since the network is fully connected, the optimization problem  
\eqref{opteq} is equivalent to the optimization problem 
\eqref{eq:optproblemlin} considered in \cite{feng2023peer}. Here, we have
$$
\vec{\mu} = \vec{1} \quad \vec{\Sigma} = \vec{I} \quad
\raisebox{-.45\height}{\begin{tikzpicture}[scale=.75, every node/.style={circle, draw, fill=white, inner sep=2pt}]

\node (1) at (0,1) {1};
\node (2) at (2,1) {2};
\node (3) at (2,-1) {3};
\node (4) at (0,-1) {4};

\draw (1) -- (2) -- (3) -- (4) -- (1);

\draw (3) -- (1);
\draw (2) -- (4);

\end{tikzpicture}}
\quad
\vec{A}_*=\begin{bmatrix}
\frac{1}{4}&\frac{1}{4}&\frac{1}{4}&\frac{1}{4}\\[2pt]
    \frac{1}{4}&\frac{1}{4}&\frac{1}{4}& \frac{1}{4}\\[2pt]
    \frac{1}{4}&\frac{1}{4}&\frac{1}{4}&\frac{1}{4}\\[2pt]
    \frac{1}{4}&\frac{1}{4}&\frac{1}{4}&\frac{1}{4}
\end{bmatrix},
$$
and $\frac{1}{2}\tr(\vec{A}_* \vec{\Sigma} \vec{A}_*^\top) = \frac{1}{2} = 0.5$.

\subsubsection{Complete graph with one edge removed}
\label{sec:oneedgeremoved}
We modify the previous example by removing the edge $\{2,4\}$ so that agents $2$ and $4$ are not allowed to share risk:
$$
\vec{\mu} = \vec{1} \quad \vec{\Sigma} = \vec{I} \quad
\raisebox{-.45\height}{\begin{tikzpicture}[scale=.75, every node/.style={circle, draw, fill=white, inner sep=2pt}]

\node (1) at (0,1) {1};
\node (2) at (2,1) {2};
\node (3) at (2,-1) {3};
\node (4) at (0,-1) {4};

\draw (1) -- (2) -- (3) -- (4) -- (1);

\draw (3) -- (1);

\end{tikzpicture}}
\quad
\vec{A}_* =\begin{bmatrix}
    \frac{1}{5}&\frac{3}{10}&\frac{1}{5}&\frac{3}{10}\\[2pt]
\frac{3}{10}&\frac{2}{5}&\frac{3}{10}&0\\[2pt]
    \frac{1}{5}&\frac{3}{10}&\frac{1}{5}&\frac{3}{10}\\[2pt]
    \frac{3}{10}&0&\frac{3}{10}&\frac{2}{5}
\end{bmatrix},
$$
and
$\frac{1}{2}\tr(\vec{A}_*\vec \Sigma \vec{A}_*^\top) = \frac{3}{5} = 0.6$. The entries $(2,4)$ and $(4,2)$ of $\vec{A}_*$ are equal to zero since agents $2$ and $4$ cannot exchange risk, and the value of the objective function has slightly increased due to the additional restriction.

\subsubsection{Positive correlated losses} \label{sec:poscorr}
We further modify the previous example by changing the covariance matrix $\vec{\Sigma}$ such that some losses are positively correlated:
$$
\vec{\mu} = \vec{1} \quad 
\vec\Sigma =
\begin{bmatrix}
    1&\frac{1}{3}&0&\frac{1}{3}\\[2pt]
    \frac{1}{3}&1&\frac{1}{3}&0\\[2pt]
    0&\frac{1}{3}&1&\frac{1}{3}\\[2pt]
    \frac{1}{3}&0&\frac{1}{3}&1
\end{bmatrix}
 \quad
\raisebox{-.45\height}{\begin{tikzpicture}[scale=.75, every node/.style={circle, draw, fill=white, inner sep=2pt}]

\node (1) at (0,1) {1};
\node (2) at (2,1) {2};
\node (3) at (2,-1) {3};
\node (4) at (0,-1) {4};

\draw (1) -- (2) -- (3) -- (4) -- (1);

\draw (3) -- (1);

\end{tikzpicture}}
\quad
\vec{A}_* =
\begin{bmatrix}
\frac{1}{7}&\frac{5}{14}&\frac{1}{7}&\frac{5}{14}\\[2pt]
\frac{5}{14}&\frac{2}{7}&\frac{5}{14}&0\\[2pt]
\frac{1}{7}&\frac{5}{14}&\frac{1}{7}&\frac{5}{14}\\[2pt]
\frac{5}{14}&0&\frac{5}{14}&\frac{2}{7}
\end{bmatrix},
$$
and $\frac{1}{2}\tr(\vec{A}_*\vec\Sigma \vec{A}_*^\top) = \frac{19}{21} \approx 0.905$. Adding positive correlation between the losses makes it harder to effectively share risk due to the increased chance of simultaneous loss, which results in an increase of the objective function.
\subsubsection{Negative  correlated losses} \label{sec:negcorlosses}
Finally, we modify the previous example by changing the covariance matrix $\vec\Sigma$ such that some losses are negatively correlated:
$$
\vec{\mu} = \vec{1} \quad 
\vec\Sigma =
\left[\begin{array}{rrrr}
    1&-\frac{1}{3}&0&-\frac{1}{3}\\[2pt]
    -\frac{1}{3}&1&-\frac{1}{3}&0\\[2pt]
    0&-\frac{1}{3}&1&-\frac{1}{3}\\[2pt]
    -\frac{1}{3}&0&-\frac{1}{3}&1
\end{array} \right]
 \quad
\raisebox{-.45\height}{\begin{tikzpicture}[scale=.75, every node/.style={circle, draw, fill=white, inner sep=2pt}]

\node (1) at (0,1) {1};
\node (2) at (2,1) {2};
\node (3) at (2,-1) {3};
\node (4) at (0,-1) {4};

\draw (1) -- (2) -- (3) -- (4) -- (1);

\draw (3) -- (1);

\end{tikzpicture}}
\quad
\vec{A}_* =
\begin{bmatrix}
\frac{5}{23}&\frac{13}{46}&\frac{5}{23}&\frac{13}{46}\\[2pt]
\frac{13}{46}&\frac{10}{23}&\frac{13}{46}&0\\[2pt]
\frac{5}{23}&\frac{13}{46}&\frac{5}{23}&\frac{13}{46}\\[2pt]
\frac{13}{46}&0&\frac{13}{46}&\frac{10}{23}
\end{bmatrix},
$$
and  $\frac{1}{2}\tr(\vec{A}_*\vec\Sigma \vec{A}_*^\top) = \frac{19}{69} \approx
0.275$. The negative correlations make risk-sharing more effective, so the objective function decreases relative to the previous example.

\begin{remark}
So far, the basic examples of $\vec{A}_*$ above have been symmetric matrices with nonnegative entries. We emphasize that neither property holds in general.
For additional, more general, examples of Theorem \ref{MainResults} see  \S \ref{sec:examples} and \S \ref{sec:barbell}.
\end{remark}

\subsection{Friends take equal shares of risk}

In this section, we study a special case of the network optimization problem \eqref{opteq}, where friends of an agent take on an equal share of the agent's risk. Due to this additional restriction, the objective function will, in general, increase compared to the previous unrestricted model, in exchange for equitably distributing risk between friends.

\begin{definition}[Friends take an equal share of risk] \label{assumption2}
Let $H(\vec{X}) = \vec{A} \vec{X}$ be a linear risk-sharing rule, for matrix $\vec{A} = (a_{ij}) \in \mathbb{R}^{n \times n}$. We say that the risk-sharing rule has the property that friends take an equal share of risk if
\begin{equation}
\label{equalshare}
a_{i_1j} = a_{i_2j}, \quad \forall \{i_1,j\},\{i_2,j\} \in E.
\end{equation}
\end{definition}

Informally speaking, this definition ensures that each of the friends of agent $j$ takes an equal share of the loss $X_j$ of agent $j$. In terms of the matrix $\vec{A}$, this definition imposes the condition that all the nonzero off-diagonal entries in each column are the same. This assumption is motivated by the network reciprocal contract with maximum contribution as studied in \cite{charpentier2021}. 
Taking these restrictions into account, the optimization problem for the risk-sharing rule $H(\vec{X}) = \vec{A} \vec{X}$ becomes
\begin{equation} \label{eq:opttakesameshare}
\begin{cases}
\text{minimize} & \frac{1}{2}\tr(\vec{A} \vec{\Sigma} \vec{A}^\top) \\
\text{subject to} & \vec{1}^\top \vec{A} = \vec{1}^\top, \quad  \vec{A} \vec{\mu}  = \vec{\mu} , \quad 
a_{ij} = 0 \text{ whenever } \{i,j\} \in \overline{E},
\\
& a_{i_1j} = a_{i_2j}, \quad \forall \{i_1,j\},\{i_2,j\} \in E,
\end{cases}
\end{equation}
where the minimization is taken over all $\vec{A} = (a_{i j}) \in \mathbb{R}^{n \times n}.$

Before stating our second main result, we review some standard concepts related to the {\em{graph Laplacian}} $\vec{L}$ of a graph.  For this, 
recall that the {\em{adjacency matrix}} $\vec W = (w_{i j}) \in \mathbb{R}^{n \times n}$ of the graph $G = (V,E)$ with vertices $V = \{1,\ldots,n\}$ is the $n\times n$ matrix with entries 
$$
w_{ij} = \begin{cases}
1, & \{i,j\}\in E, \\
0, & \text{otherwise}.
\end{cases}
$$ 
The {\em{graph Laplacian}} $\vec{L}$ is defined as
\begin{equation}
\label{Def:graphlapacian}
\vec{L= D - W},
\end{equation}
where $\vec D = \diag(d_1,\ldots,d_n)$ is the diagonal matrix with diagonal entries $d_i$ given by the degree of agent $i$. Recall that a graph is connected if there is a path between any two vertices, where a path between $i$ and $j$ is a sequence of edges $\{i_1,i_2\}$, $\{i_2,i_3\}$, $\{i_3,i_4\},
\ldots, \{i_{m-1},i_m\}$ such that $i=i_1$ and $j=i_m$. When the underlying graph is connected, the graph Laplacian $\vec L$ has a one-dimensional null space spanned by the all-ones vector $\vec 1$, see  \cite[Chapter 1.3]{chung1997spectral}. 

Our next result characterizes the solution to the optimization problem \eqref{eq:opttakesameshare} for the risk-sharing rule $H(\vec{X}) = \vec{A} \vec{X}$  for connected graphs.

\begin{theorem}[Friends take an equal share of risk]
\label{thmoptshare}
The optimization problem \eqref{eq:opttakesameshare} has a unique solution
\begin{equation} \label{hatAdef}
\vec{\hat{A}} = \vec{I} - \hat{c} \vec{L} \vec{M}^{-1},
\quad \text{for} \quad 
\hat{c} = \frac{\tr \left( \vec{\Sigma} \vec{L} \vec{M}^{-1} \right)}{\tr \left( \vec{L} \vec{M}^{-1} \vec{\Sigma} \vec{M}^{-1} \vec{L} \right)},
\end{equation}
where $\vec{M} = \diag(\mu_1,\ldots,\mu_n)$, and $\vec{L}$ is the graph Laplacian.
\end{theorem}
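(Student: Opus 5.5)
The plan is to show that the constraints of \eqref{eq:opttakesameshare} cut the feasible set down to a one-parameter family of matrices, and then minimize a scalar quadratic. Throughout I use that $G$ is connected, as stated before the theorem.

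\emph{Step 1: parametrize the column constraints.} Fix a column $j$. The constraint $a_{ij}=0$ for $\{i,j\}\in\overline{E}$ kills all entries off the diagonal and off the neighbors of $j$. The equal-share constraint \eqref{equalshare} forces all neighbor entries $a_{ij}$, $\{i,j\}\in E$, to equal a common value; call it $-c_j$. Full allocation $\vec 1^\top\vec A=\vec 1^\top$ then reads $a_{jj}-d_jc_j=1$, so $a_{jj}=1+d_jc_j$.

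I will then compare this column with column $j$ of $\vec I-\vec L\vec C$, where $\vec C=\diag(c_1,\ldots,c_n)$. Since $(\vec L\vec C)_{ij}=L_{ij}c_j$, that column has diagonal entry $1-d_jc_j$ and neighbor entries $+c_j$. These match the column above after flipping the sign of every $c_j$, which is just a relabeling. Conversely, every matrix $\vec I-\vec L\vec C$ with $\vec C$ diagonal satisfies the support, equal-share and full-allocation constraints, because $\vec 1^\top\vec L=\vec 0^\top$. So the matrices satisfying all constraints except actuarial fairness are exactly
$$
\vec A=\vec I-\vec L\vec C,\qquad \vec C \text{ diagonal and arbitrary}.
$$

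\emph{Step 2: impose actuarial fairness using connectivity.} The condition $\vec A\vec\mu=\vec\mu$ becomes $\vec L(\vec C\vec\mu)=\vec 0$. Since $G$ is connected, the null space of $\vec L$ is spanned by $\vec 1$, so $\vec C\vec\mu=c\vec 1$ for some scalar $c$. That is, $c_j\mu_j=c$ for every $j$. Because every $\mu_j>0$ (\S\ref{sec:prelmin}), this is equivalent to $\vec C=c\vec M^{-1}$. Hence the feasible set is exactly $\{\vec I-c\vec L\vec M^{-1}: c\in\mathbb R\}$. I expect this step, and the exact identification of the feasible set in Step 1, to be the crux; once it is done the rest is a one-variable calculus exercise.

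\emph{Step 3: minimize the scalar quadratic.} Set $\vec K=\vec L\vec M^{-1}$ and expand the objective:
$$
\tfrac12\tr\big((\vec I-c\vec K)\vec\Sigma(\vec I-c\vec K)^\top\big)
=\tfrac12\Big(\tr\vec\Sigma-2c\,\tr(\vec\Sigma\vec K)+c^2\tr(\vec K\vec\Sigma\vec K^\top)\Big).
$$
Here the two cross terms $\tr(\vec K\vec\Sigma)$ and $\tr(\vec\Sigma\vec K^\top)$ are merged using invariance of the trace under transposition and cyclic permutation. Since $\vec L$ and $\vec M$ are symmetric, $\vec K^\top=\vec M^{-1}\vec L$, and so $\tr(\vec K\vec\Sigma\vec K^\top)=\tr(\vec L\vec M^{-1}\vec\Sigma\vec M^{-1}\vec L)$.

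This leading coefficient is strictly positive. Indeed $n\ge2$ and $G$ is connected, so $\vec L\neq\vec 0$, hence $\vec K\ne\vec 0$. Positive definiteness of $\vec\Sigma$ then gives $\tr(\vec K\vec\Sigma\vec K^\top)=\sum_i \vec k_i^\top\vec\Sigma\vec k_i>0$, where $\vec k_i^\top$ are the rows of $\vec K$. The quadratic is therefore strictly convex in $c$. Its unique minimizer is $\hat c$ as in \eqref{hatAdef}, which yields existence and uniqueness of $\hat{\vec A}$.
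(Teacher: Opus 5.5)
Your proposal is correct and follows essentially the same route as the paper. You reduce the support, equal-share and full-allocation constraints to $\vec{A}=\vec{I}-\vec{L}\vec{S}$ with $\vec{S}$ diagonal, use connectivity (null space of $\vec{L}$ spanned by $\vec{1}$) to force $\vec{S}=c\vec{M}^{-1}$, and then minimize the resulting strictly convex quadratic in $c$. The differences are minor. Your sign convention for the neighbor entries is needlessly roundabout but harmless. You get positivity of the leading coefficient from $\vec{L}\neq\vec{0}$ and positive definiteness of $\vec{\Sigma}$, rather than from the paper's rank-$(n-1)$ Frobenius-norm argument, which is equally valid and slightly more elementary.
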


Note that the matrix $\vec{M}^{-1}$ is well defined since $\vec{\mu}$ has positive entries; see
Section \ref{sec:prelmin}. Moreover, the constant $\hat{c}$ is well defined since the term in the denominator of the fraction defining it is positive. Indeed, we have
\begin{equation} \label{eq:justifytracepos}
\tr \left( \vec{L} \vec{M}^{-1} \vec{\Sigma} \vec{M}^{-1} \vec{L} \right) = \tr  
\left( (\vec{\Sigma}^{1/2} \vec{M}^{-1} \vec{L})^\top
(\vec{\Sigma}^{1/2} \vec{M}^{-1} \vec{L}) \right)
= \| \vec{\Sigma}^{1/2} \vec{M}^{-1} \vec{L} \|_F^2,
\end{equation}
where the final equality follows from the trace formula for the Frobenius norm. Since $\vec{\Sigma}^{1/2} \vec{M}^{-1}$ is of full rank and 
$\vec{L}$ has rank $n-1$, it follows
that the product 
$\vec{\Sigma}^{1/2} \vec{M}^{-1} \vec{L}$ has rank $n-1$.
Since we assume $n \ge 2$ throughout the paper,  its Frobenius norm
$\|\vec{\Sigma}^{1/2} \vec{M}^{-1} \vec{L}\|_F^2 > 0$.

\begin{proof}[Proof of Theorem \ref{thmoptshare}]
First, we derive the general form of a risk-sharing matrix $\vec{A} = (a_{ij}) \in \mathbb{R}^{n \times n}$ that satisfies the constraints of \eqref{eq:opttakesameshare}.
Together, the conditions
$a_{ij} = 0$ whenever $\{i,j\} \in \overline{E}$
and
$a_{i_1j} = a_{i_2j}, \forall \{i_1,j\},\{i_2,j\} \in E$
imply that 
$$
a_{ij} = \begin{cases}
t_j & \text{for } i=j \\
s_j & \text{for }\{i,j\} \in E \\
0 & \text{otherwise},
\end{cases}
$$
for some values $t_1,\ldots,t_n \in \mathbb{R}$ and $s_1,\ldots,s_n \in \mathbb{R}$.
Further taking into account the constraint $\vec{1}^\top \vec{A} = \vec{1}^\top$, we have
$$
t_i = 1 - d_i s_i,
$$
where $d_i$ is the degree of vertex $i$. So, in matrix notation, $\vec{A}$ is of the form
\begin{equation} \label{eq:ILS}
\vec{A} = \vec I - \vec{L} \vec{S},
\end{equation}
where $\vec{L}$ is the graph Laplacian defined in \eqref{Def:graphlapacian}, and $\vec{S} = \diag(s_1,\ldots,s_n)$.
Next, we consider the constraint
$\vec{A} \vec{\mu}  = \vec{\mu}$, which together with \eqref{eq:ILS} yields the equation
$$ 
\vec{\mu} - \vec{L} \vec{S} \vec{\mu} = \vec{\mu},
$$
or equivalently $\vec{L} \vec{S} \vec{\mu} = \vec{0}$. 
As the underlying graph is connected by assumption, the Laplacian $\vec L$ has a one-dimensional null space spanned by the all-ones vector $\vec 1$,  as noted above.
Thus, the risk-sharing rule is actuarially fair only when $\vec{S} \vec{\mu} = c \vec{1}$ for some constant $c \in \mathbb{R}$. It follows that 
\begin{equation} \label{formofsoln}
\vec{A} = \vec{I} - c \vec{L} \vec{M}^{-1},
\end{equation}
where $\vec{M} = \diag(\mu_1,\ldots,\mu_n)$ and $c \in \mathbb{R}$. 

Next, we optimize the parameter $c \in \mathbb{R}$.
Substituting \eqref{formofsoln} into the objective function 
of the optimization problem \eqref{eq:opttakesameshare}, and
using the linearity of the trace, and the fact that the trace of a matrix is equal to the trace of its transpose, gives
$$
\frac{1}{2}\tr(\vec{A} \vec{\Sigma} \vec{A}^\top)  = \frac{1}{2} \tr \left( \vec{\Sigma} \right) - c \tr( \vec{L} \vec{M}^{-1} \vec{\Sigma})  + \frac{1}{2} c^2 \tr ( \vec{L} \vec{M}^{-1} \vec{\Sigma} \vec{M}^{-1} \vec{L} ).
$$
Recall that we established $\tr \left( \vec{L} \vec{M}^{-1} \vec{\Sigma} \vec{M}^{-1} \vec{L} \right) > 0$ in
\eqref{eq:justifytracepos}, so the right-hand side is a strictly convex function of the variable $c$.
Setting the derivative of this expression with respect to $c$ equal to zero and solving for the critical point $\hat{c}$ gives
\begin{equation} \label{eq:critical}
\hat{c} = \frac{\tr \left( \vec{\Sigma} \vec{L} \vec{M}^{-1} \right)}{\tr \left( \vec{L} \vec{M}^{-1} \vec{\Sigma} \vec{M}^{-1} \vec{L} \right)}.
\end{equation}
Since the objective function is strictly convex, this critical point is its minimum, and the proof is complete.
\end{proof}

Note that if the graph $G$ is not connected, then this theorem can be applied to each connected component of the graph.

\begin{remark}[Special cases of Theorem \ref{thmoptshare}]\label{optshareuncorrelated}
When the losses are uncorrelated and $\vec{\Sigma} = \diag(\sigma_1^2,\ldots,\sigma_n^2)$ we have
$$
\hat{c} = 
\frac{\sum_{i=1}^n d_i \mu_i^{-1} \sigma_i^2}{ \sum_{i=1}^n (d_i^2 + d_i) \sigma_i^2 \mu_i^{-2}}.
$$
Furthermore, if the underlying graph is $d$-regular (meaning that each vertex $i$ has degree $d_i=d$), and the loss random vector $\vec{X}$ has i.i.d. entries such that the mean $\vec\mu=\mu\vec1$ and variance $\vec\Sigma=\sigma^2\vec{I}$, then 
$$
\hat{c} =  \frac{ d \mu^{-1} \sigma^2}{ (d^2 + d) \sigma^2 \mu^{-2}} = \frac{\mu}{d+1},
\qquad \text{and} \qquad
\vec{\hat{A}} = \vec{I} - \frac{1}{d+1} \vec{L}, 
$$
which corresponds to each agent $i$ sharing their loss equally with their friends: agent $i$ keeps $1/(d+1)$ of their own loss, and each of their $d$ friends takes $1/(d+1)$ of agent $i$'s loss. 
\end{remark}

\subsection{Examples of Theorem \ref{thmoptshare}} \label{sec:examplesAhat}
In the following, we revisit three examples from 
\S \ref{sec:examples1} to illustrate how $\vec{\hat{A}}$ from Theorem \ref{thmoptshare} differs from $\vec{A}_*$ from Theorem \ref{MainResults}.
In each example, we restate the mean vector $\vec{\mu}$, covariance matrix $\vec{\Sigma}$, and graph $G= (V,E)$, and use Theorem \ref{thmoptshare}
to determine the unique $\vec{\hat{A}}$ that solves the optimization problem \eqref{eq:opttakesameshare}, and report the value of $\frac{1}{2}\tr(\vec{\hat{A}} \vec{\Sigma} \vec{\hat{A}}^\top )$.

\subsubsection{Complete graph}
For the complete graph with uncorrelated losses with unit mean and variance, see 
\S \ref{sec:completegraph}, the optimal solution $\vec{A}_*$ already fulfills the condition that friends take an equal share of risk, so the optimal risk-sharing matrix $\vec{\hat{A}}$ does not change from $\vec{A}_*.$ This example shows how networks in the $d$-regular and i.i.d. case of Remark \ref{optshareuncorrelated} already share risk equally among friends.

\subsubsection{Complete graph with one edge removed}
Next, we revisit the example from
\S \ref{sec:oneedgeremoved} of a complete graph with one edge removed and uncorrelated losses. Here, we have
$$
\vec{\mu} = \vec{1} \quad \vec{\Sigma} = \vec{I} \quad
\raisebox{-.45\height}{\begin{tikzpicture}[scale=.75, every node/.style={circle, draw, fill=white, inner sep=2pt}]

\node (1) at (0,1) {1};
\node (2) at (2,1) {2};
\node (3) at (2,-1) {3};
\node (4) at (0,-1) {4};

\draw (1) -- (2) -- (3) -- (4) -- (1);

\draw (3) -- (1);

\end{tikzpicture}}
\quad
\vec{\hat{A}} =\begin{bmatrix}
    \frac{1}{6}&\frac{5}{18}&\frac{5}{18}&\frac{5}{18}\\[2pt]
    \frac{5}{18}&\frac{4}{9}&\frac{5}{18}&0\\[2pt]
    \frac{5}{18}&\frac{5}{18}&\frac{1}{6}&\frac{5}{18}\\[2pt]
    \frac{5}{18}&0&\frac{5}{18}&\frac{4}{9}
\end{bmatrix},
$$
and $\frac{1}{2}\tr(\vec{\hat{A}}\vec\Sigma\vec{\hat{A}}^\top) = \frac{11}{18}
\approx 0.611 $.
The optimal trace slightly increases compared to the original example, while the new optimized risk-sharing matrix now meets the friends take equal shares of risk condition.
\subsubsection{Positive correlation} \label{sec:poscor2}
Next, we revisit the case of the network with positively correlated losses and one missing edge (\S \ref{sec:poscorr}).
Here, we have
$$
\vec{\mu} = \vec{1} \quad 
\vec\Sigma =
\begin{bmatrix}
    1&\frac{1}{3}&0&\frac{1}{3}\\[2pt]
    \frac{1}{3}&1&\frac{1}{3}&0\\[2pt]
    0&\frac{1}{3}&1&\frac{1}{3}\\[2pt]
    \frac{1}{3}&0&\frac{1}{3}&1
\end{bmatrix}
 \quad
\raisebox{-.45\height}{\begin{tikzpicture}[scale=.75, every node/.style={circle, draw, fill=white, inner sep=2pt}]

\node (1) at (0,1) {1};
\node (2) at (2,1) {2};
\node (3) at (2,-1) {3};
\node (4) at (0,-1) {4};

\draw (1) -- (2) -- (3) -- (4) -- (1);

\draw (3) -- (1);

\end{tikzpicture}}
\quad
\vec{\hat{A}} =
\begin{bmatrix}
\frac{5}{38}&\frac{11}{38}&\frac{11}{38}&\frac{11}{38}\\[2pt]
\frac{11}{38}&\frac{8}{19}&\frac{11}{38}&0\\[2pt]
\frac{11}{38}&\frac{11}{38}&\frac{5}{38}&\frac{11}{38}\\[2pt]
\frac{11}{38}&0&\frac{11}{38}&\frac{8}{19}
\end{bmatrix},
$$
and $\frac{1}{2}\tr(\vec{\hat{A}}\vec\Sigma\vec{\hat{A}}^\top) = \frac{107}{114} \approx 0.938.$
Once again, there is a slight increase in the objective, relative to the result in \S \ref{sec:poscorr}, as a consequence of the additional condition that friends take an equal share of risk. 

\subsubsection{Negative correlation} \label{sec:negcor}
Finally, we revisit the case of the network with negatively correlated losses and one missing edge 
(\S \ref{sec:negcorlosses}). Here, we have
$$
\vec{\mu} = \vec{1} \quad 
\vec\Sigma =
\begin{bmatrix}
    1&-\frac{1}{3}&0&-\frac{1}{3}\\[2pt]
    -\frac{1}{3}&1&-\frac{1}{3}&0\\[2pt]
    0&-\frac{1}{3}&1&-\frac{1}{3}\\[2pt]
    -\frac{1}{3}&0&-\frac{1}{3}&1
\end{bmatrix}
 \quad
\raisebox{-.45\height}{\begin{tikzpicture}[scale=.75, every node/.style={circle, draw, fill=white, inner sep=2pt}]

\node (1) at (0,1) {1};
\node (2) at (2,1) {2};
\node (3) at (2,-1) {3};
\node (4) at (0,-1) {4};

\draw (1) -- (2) -- (3) -- (4) -- (1);

\draw (3) -- (1);

\end{tikzpicture}}
\quad
\vec{\hat{A}} = 
\begin{bmatrix}
    \frac{13}{70}&\frac{19}{70}&\frac{19}{70}&\frac{19}{70}\\[2pt]
    \frac{19}{70}&\frac{16}{35}&\frac{19}{70}&0\\[2pt]
    \frac{19}{70}&\frac{19}{70}&\frac{13}{70}&\frac{19}{70}\\[2pt]
    \frac{19}{70}&0&\frac{19}{70}&\frac{16}{35}
\\\end{bmatrix},
$$
and $\frac{1}{2}\tr(\vec{\hat{A}}\vec\Sigma\vec{\hat{A}}^\top) \approx 0.281$. Once again, there is a slight increase in the objective, relative to the result in \S \ref{sec:negcorlosses} as a consequence of the additional condition that friends take an equal share of risk.

\subsection{Nonnegativity Conditions} \label{sec:nonneg}

In this section, we develop conditions for the nonnegativity of the optimal risk-sharing matrices $\vec{A}_*$ and $\vec{\hat{A}}$. While these matrices sometimes naturally have nonnegative entries (such as in the examples considered in \S \ref{sec:examples1} and \S \ref{sec:examplesAhat}), in general, negative entries may arise in both optimization problems. 
As noted in Section 1.2, a risk-sharing scheme involving signed exchanges is analogous to financial models that allow agents to buy and short-sell financial instruments.  Allowing signed transfers is mathematically convenient, since they allow the optimization problems to be formulated as equality-constrained quadratic programs. By not imposing nonnegativity constraints on the risk-sharing matrix in the previous sections, we obtain the explicit solutions in Theorems \ref{MainResults} and \ref{thmoptshare}. 

If one additionally requires $a_{ij}\geq 0$, then the optimization problem becomes a quadratic program with both equality and inequality constraints. In this case, the optimal solution is characterized by the KKT conditions together with complementary slackness, and 
the solution can be determined using a quadratic program solver, but explicit formulas of the same type are generally unavailable. It is therefore useful to determine necessary and sufficient conditions that guarantee, a priori, the non-negativity of the entries of the optimal risk-sharing matrices $\vec{A}_*$ and $\vec{\hat{A}}$.   

Furthermore, as illustrated below, these conditions involve naturally related statistical quantities associated with the risk characteristics of individual participants 
and with global statistical quantities.
In the statements below, the non-dimensional quantity 
$$\rho^2 = \vec\mu^T \vec\Sigma^{-1} \vec\mu$$ 
naturally arises.  Its scalar version is the square of the Return to Risk, $\left(\tfrac{\mu}{\sigma}\right)^2,$ of a random variable $X$ with mean $\mu$ and standard deviation $\sigma$.  Thus $\rho^2$ is the corresponding quantity for a random vector $\vec X$ with mean $\vec\mu$ and  a positive definite covariance matrix $\vec\Sigma$.  We will refer to $\rho$ as the collective Return to Risk ratio.

\subsubsection{ Nonnegativity Conditions for $\vec{A}_*$ for the case of the complete graph}
\label{sec:nonnegAstar}

In the following lemmas, we state conditions on $\vec{\mu}$ and $\vec{\Sigma}$ 
such that the optimal $\vec{A}_*$ defined by Theorem \ref{thm:feng} (or equivalently, defined by Theorem \ref{MainResults} for the case of the complete graph) has nonnegative entries.  We start by considering the simplified case where $\vec{\Sigma}$ is a scaled identity matrix, before considering the general case.

\begin{lemma}\label{MuConditions}
Assume that $\vec\Sigma = \sigma^2 \vec I$ for some $\sigma\neq 0,$ and let $\underline{\vec\mu} := (\min_{1\le j \le n} \mu_j) \vec{1}.$   Then, all entries of the matrix $\vec{A}_*$  defined in Theorem \ref{thm:feng} are nonnegative if and only if
$$
\|\vec\mu-\underline{\vec\mu}\|_1 \|\vec\mu\|_\infty \le \|\vec\mu\|_2^2.
$$
\end{lemma}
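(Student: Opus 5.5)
With $\vec\Sigma=\sigma^2\vec I$, the plan is to write down the entries of $\vec A_*$ from Theorem \ref{thm:feng} explicitly and reduce nonnegativity to a single inequality for the worst entry.

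First we have $\vec\Sigma^{-1}=\sigma^{-2}\vec I$ and $a=\sigma^{-2}\|\vec\mu\|_2^2$. The factors of $\sigma$ cancel in \eqref{fang2023eq}, leaving
$$
\vec A_* = \frac1n \vec 1\vec 1^\top + \frac{1}{\|\vec\mu\|_2^2}\left(\vec\mu\vec\mu^\top - \frac1n \vec 1\vec 1^\top\vec\mu\vec\mu^\top\right).
$$
Write $s=\vec 1^\top\vec\mu=\|\vec\mu\|_1$; the last equality holds because $\vec\mu$ has positive entries (\S\ref{sec:prelmin}). Then
$$
(\vec A_*)_{ij} = \frac1n + \frac{\mu_j(\mu_i - s/n)}{\|\vec\mu\|_2^2}
= \frac{\|\vec\mu\|_2^2 - \mu_j(s - n\mu_i)}{n\|\vec\mu\|_2^2}.
$$
So $\vec A_*\ge 0$ entrywise if and only if $\mu_j(s-n\mu_i)\le\|\vec\mu\|_2^2$ for all $i,j$.

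Next we maximize the left-hand side over $(i,j)$. The factor $s-n\mu_i$ is largest when $\mu_i=\min_k\mu_k=:\underline\mu$. In that case $s-n\underline\mu=\sum_k(\mu_k-\underline\mu)=\|\vec\mu-\underline{\vec\mu}\|_1$, which is nonnegative. Because $\mu_j>0$, the term $\mu_j(s-n\mu_i)$ is increasing in $s-n\mu_i$ for each fixed $j$. Hence for each $j$ the maximum over $i$ is $\mu_j\|\vec\mu-\underline{\vec\mu}\|_1$. Maximizing next over $j$, and using the nonnegativity of this factor, gives $\|\vec\mu\|_\infty\|\vec\mu-\underline{\vec\mu}\|_1$. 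The choice $i=\arg\min_k\mu_k$, $j=\arg\max_k\mu_k$ attains this value, so the condition over all pairs is equivalent to the single stated inequality. This establishes both directions simultaneously.

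The only delicate step is this maximization. It relies on $\mu_j>0$ and on $s-n\underline\mu\ge0$, so that the two maximizations decouple. There is no issue when $i=j$, because the formula for $(\vec A_*)_{ij}$ holds uniformly, including on the diagonal.
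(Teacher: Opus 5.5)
Your proof is correct and follows the paper's argument essentially step for step. Like the paper, you reduce $\vec A_*$ to $\frac1n\vec1\vec1^\top + \left(\vec I - \frac1n\vec1\vec1^\top\right)\vec\mu\vec\mu^\top/\|\vec\mu\|_2^2$, translate entrywise nonnegativity into $\mu_j(\|\vec\mu\|_1 - n\mu_i)\le\|\vec\mu\|_2^2$ for all $i,j$, and maximize over $i$ and then $j$. The paper leaves implicit that $\mu_j>0$ and $\|\vec\mu\|_1 - n\min_k\mu_k\ge 0$; you state these explicitly to justify decoupling the two maximizations, which is a welcome bit of added rigor.
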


\begin{proof}[Proof of Lemma \ref{MuConditions}]
Since $\vec{\Sigma} = \sigma^{{2}} \vec{I}$, we have
$$
\vec{A}_* = \frac{1}{n} \vec{1} \vec{1}^\top + \left( \vec{I} - \frac{1}{n} \vec{1} \vec{1}^\top \right) \frac{\vec{\mu} \vec{\mu}^\top}{\vec{\mu}^\top \vec{\mu}}.
$$
Thus, all of the entries of $\vec{A}_*$ are nonnegative if and only if
$$
0\le \frac{1}{n} +\frac{\mu_j}{\|\vec\mu\|_2^2} \left(\mu_i - \frac{1}{n} \|\vec\mu\|_1\right), \quad \forall i,j \in \{1,\ldots,n\}.
$$
Rearranging terms, the condition for nonnegative entries is 
 \begin{equation} \label{eq:conditionfornonneg}
\mu_j (\|\vec\mu\|_1 - n \mu_i) \le  \|\vec\mu\|_2^2, \quad \forall i,j \in \{1,\ldots,n\}.
\end{equation}
The result follows from maximizing the left-hand side over $i$ and $j$. Indeed, the maximum over $i$ occurs when 
$$
\|\vec{\mu}\|_1 - n \mu_i = \|\vec\mu\|_1 - n \left(\min_{1\le k \le n} \mu_k \right) =
\|\vec\mu-\underline{\vec\mu}\|_1,
$$
and the maximum over $j$ occurs when $\mu_j=\|\vec\mu\|_\infty$.
\end{proof}

In the case $\vec\Sigma = \sigma^2 \vec I$,  $\rho^2= \tfrac{1}{\sigma^2}  \|\vec\mu\|_2^2.$ The nonnegativity conditions stated in Lemma \ref{MuConditions} can be written as
$$
\frac{\| \vec\mu -\underline{\vec\mu}\|_1 \|\vec\mu\|_\infty}{\sigma^2} \le \rho^2,
$$ 
where $\rho$ is the collective Return to Risk ratio.
One can interpret the left hand side as a measure of the variability of the expected losses normalized by their variance.  Note that the dimensionality of the loss vector  $\vec{X}$ is reflected in the use of the vector norms $\|\vec\mu - \underline{\vec\mu}\|_1$ and $\|\vec\mu\|_2.$

Next, we consider the case where $\vec{\Sigma}$ is an arbitrary positive definite matrix.

\begin{proposition}
\label{lem:gencase}
Assume that $\vec{\Sigma}$ is positive definite. Let 
$\underline{\vec\mu} := (\min_{1\le j \le n} \mu_j) \vec{1}$ and $\overline{\vec\mu} := (\max_{1\le j \le n} \mu_j) \vec{1}$. Define
$$
c_0 :=\min_{1\le j \le n}(\vec\Sigma^{-1}\vec\mu)_j \quad \text{and}  \quad c_1:=\max_{1\le j \le n}(\vec\Sigma^{-1}\vec\mu)_j. 
$$ 
Then, all the entries of the matrix $\vec{A}_*$  defined in Theorem \ref{thm:feng} are nonnegative if and only if
$$
\max \left\{ -c_0 \|\vec{\mu} - \overline{\vec{\mu}}\|_1, c_1 \|\vec{\mu} - \underline{\vec{\mu}}\|_1 \right\} \le \vec{\mu}^\top \vec{\Sigma}^{-1} \vec{\mu}.
$$
\end{proposition}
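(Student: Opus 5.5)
We follow the same route as the proof of Lemma \ref{MuConditions}: write out the entries of $\vec{A}_*$ from Theorem \ref{thm:feng} explicitly, then optimize the resulting entrywise inequalities over the indices. Set $\vec{v} := \vec\Sigma^{-1}\vec\mu$ and $a = \vec\mu^\top\vec\Sigma^{-1}\vec\mu > 0$. Then $\vec{\mu}\vec{\mu}^\top\vec\Sigma^{-1} = \vec\mu\vec v^\top$ by symmetry of $\vec\Sigma$, and
$$
(\vec{A}_*)_{ij} = \frac{1}{n} + \frac{1}{a}\left(\mu_i - \frac{1}{n}\|\vec\mu\|_1\right) v_j .
$$
Here we use $\vec1^\top\vec\mu = \|\vec\mu\|_1$, which holds because $\vec\mu$ has positive entries. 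Multiplying by $na>0$, nonnegativity of all entries is equivalent to
$$
v_j\left(\|\vec\mu\|_1 - n\mu_i\right) \le a \qquad \text{for all } i,j \in \{1,\ldots,n\}.
$$

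The main step is to compute the maximum of the bilinear-type expression $v_j\,w_i$ over all pairs $(i,j)$, where $w_i := \|\vec\mu\|_1 - n\mu_i$. The range of $w_i$ is easy to describe. Its largest value is $\|\vec\mu\|_1 - n\min_k\mu_k = \|\vec\mu-\underline{\vec\mu}\|_1 \ge 0$. Its smallest value is $\|\vec\mu\|_1 - n\max_k\mu_k = -\|\vec\mu-\overline{\vec\mu}\|_1 \le 0$. The entries $v_j$ take values in $[c_0,c_1]$, with both endpoints attained. For a fixed $j$, the quantity $v_j w_i$ is linear in $w_i$, so its maximum over $i$ is attained at one of the two endpoints. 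This gives
$$
\max_i v_j w_i = \max\left\{ v_j\|\vec\mu-\underline{\vec\mu}\|_1,\; -v_j\|\vec\mu-\overline{\vec\mu}\|_1\right\}.
$$
Maximizing next over $j$, the first term is maximized at $v_j = c_1$ and the second at $v_j = c_0$. The overall maximum is therefore $\max\{c_1\|\vec\mu-\underline{\vec\mu}\|_1,\,-c_0\|\vec\mu-\overline{\vec\mu}\|_1\}$. Requiring this maximum to be at most $a$ is exactly the stated condition.

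The only point needing care is the sign bookkeeping: both endpoints in $i$ and in $j$ must be considered, because $c_0$ or $c_1$ may be negative. The endpoint argument handles this automatically, since maximizing over a product of ranges of the two independent indices is valid regardless of signs. It is not necessary to know that $c_1>0$, although this does hold because $\vec\mu^\top\vec v = a>0$. Because the maxima are attained at actual indices, the condition is both necessary and sufficient.
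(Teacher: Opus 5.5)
Your proposal is correct and follows essentially the same route as the paper: write out $(\vec A_*)_{ij}$, reduce nonnegativity to $(\vec\Sigma^{-1}\vec\mu)_j(\|\vec\mu\|_1 - n\mu_i)\le \vec\mu^\top\vec\Sigma^{-1}\vec\mu$ for all $i,j$, and then maximize over the indices. Your endpoint argument (maximize over $i$ for fixed $j$, then over $j$) is a cleaner version of the paper's case split on the sign of $(\vec\Sigma^{-1}\vec\mu)_{j^*}$, and it makes the sign bookkeeping fully explicit.
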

\begin{proof}[Proof of Proposition \ref{lem:gencase}]
Recall that 
$$
\vec{A}_* =\frac{1}{n} \vec{1} \vec{1}^\top +\frac{1}{\vec{\mu}^\top \vec{\Sigma}^{-1} \vec{\mu}}\left(\vec I- \frac{1}{n} \vec{1} \vec{1}^\top\right)\vec{\mu \mu}^\top \vec\Sigma^{-1}.
$$
Thus, all of the entries of $\vec{A}_*$ are nonnegative if and only if
$$
0 \le \frac{1}{n} +  \frac{\left(  \vec{\Sigma}^{-1} \vec{\mu} \right)_j}{\vec{\mu}^\top \vec{\Sigma}^{-1} \vec{\mu}}  \left( \mu_i - \frac{1}{n}\|\vec{\mu}\|_1 \right) \quad \forall i,j \in \{1,\ldots,n\}.
$$
Rearranging terms, the condition for nonnegative entries is
$$
\left(  \vec{\Sigma}^{-1} \vec{\mu} \right)_j \left(  \|\vec{\mu}\|_1 - n \mu_i \right)  \le \vec{\mu}^\top \vec{\Sigma}^{-1} \vec{\mu}
\quad \forall i,j \in \{1,\ldots,n\}.
$$
We proceed as before, maximizing over $i, j$ and denote by $i^*, j^*$ the indices at which the maximum occurs.  If $\left(\vec\Sigma^{-1}\vec \mu\right)_{j^{{*}}} \geq 0,$ then
$$
\max_{1 \le i,j \le n} \left( \left(  \vec{\Sigma}^{-1} \vec{\mu} \right)_j \left( \|\vec{\mu}\|_1 - n \mu_i \right) \right)= (\vec\Sigma^{-1}\vec\mu)_{j^*}
\|\vec{\mu} - \underline{\vec{\mu}}\|_1.
$$
Otherwise, if  $\left(  \vec{\Sigma}^{-1} \vec{\mu} \right)_{j^*} < 0$, then
$$
\max_{1 \le i,j \le n} \left( \left(  \vec{\Sigma}^{-1} \vec{\mu} \right)_j \left( \|\vec{\mu}\|_1 - n \mu_i \right)\right) = -(\vec\Sigma^{-1}\vec\mu)_{j^*} 
\|\vec{\mu} - \overline{\vec{\mu}}\|_1.
$$
Taking the maximum of these two cases gives the result.
\end{proof}

The conditions for nonnegativity of the entries of $\vec{A}_*$ have a similar interpretation as above.  However, the covariance matrix is involved in the determination of $c_0, c_1$ representing, respectively, the smallest and largest entry of the (dimensional) vector $\vec\nu=\vec{\Sigma}^{-1}\vec\mu$.  We remark that $\vec\nu$ also arises in the resolution of the Markowitz quadratic optimization problem in the determination of the `market' weights of the optimal portfolio, see e.g. \cite{markowitz}.  In the context of this paper, $\vec\nu$ appears as a measure of the variability of the loss vector $\vec X$ with strictly positive mean $\vec\mu$ and covariance $\vec\Sigma$, relative to the square of the collective Return to Risk ratio $\rho^2.$

\subsubsection{Nonnegativity conditions for $\vec{\hat{A}}$} \label{sec:nonnegAhat}
In the following, we consider a condition for the nonnegativity of the optimal risk-sharing matrix $\vec{\hat{A}}$ defined in Theorem \ref{thmoptshare} for the case where friends take an equal share of risk.

\begin{lemma} \label{FriendsEqualPos}
Under the hypotheses of Theorem \ref{thmoptshare}, assume that $\vec{\hat{A}} = \vec{I} - \hat{c} \vec{L} \vec{M}^{-1}$ is defined
by \eqref{hatAdef}. Then $\vec{\hat{A}}$ has all nonnegative entries if and only if
$$
1 \ge  \hat{c}\frac{d_i}{\mu_i} \ge 0, \quad 
$$ 
for all $1\leq i\leq n.$

\end{lemma}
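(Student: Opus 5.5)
The plan is to compute the entries of $\vec{\hat{A}} = \vec{I} - \hat{c}\vec{L}\vec{M}^{-1}$ explicitly and read off the sign conditions entry by entry. Since $\vec{L} = \vec{D} - \vec{W}$ and $\vec{M}^{-1} = \diag(\mu_1^{-1},\ldots,\mu_n^{-1})$, right-multiplication by $\vec{M}^{-1}$ scales column $j$ by $\mu_j^{-1}$. This gives
$$
\hat{a}_{ij} = \begin{cases}
1 - \hat{c}\, d_j/\mu_j, & i=j,\\
\hat{c}/\mu_j, & \{i,j\}\in E,\\
0, & \text{otherwise},
\end{cases}
$$
which matches the form $t_j = 1 - d_j s_j$, $s_j = \hat{c}/\mu_j$ found in the proof of Theorem \ref{thmoptshare}. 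Zero entries impose no constraint, so nonnegativity of $\vec{\hat{A}}$ is equivalent to two families of conditions:
\begin{itemize}
\item $1 - \hat{c}\,d_i/\mu_i \ge 0$ for every $i$;
\item $\hat{c}/\mu_j \ge 0$ for every $j$ having at least one neighbor.
\end{itemize}

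The first family is exactly $\hat{c}\,d_i/\mu_i \le 1$. For the second family, recall that $\mu_j > 0$ (Section \ref{sec:prelmin}). Hence $\hat{c}/\mu_j \ge 0$ holds if and only if $\hat{c} \ge 0$, provided at least one edge exists. That proviso holds: the graph is connected and $n \ge 2$, so every vertex has $d_j \ge 1$.

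It remains to match this with the stated condition $0 \le \hat{c}\,d_i/\mu_i$ for all $i$. Since $d_i/\mu_i > 0$ for every $i$, this condition is equivalent to $\hat{c} \ge 0$. Both directions of the ``if and only if'' then follow immediately.

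The only point needing care is showing $d_i \ge 1$, so that the off-diagonal entries are actually present and the lower bound is genuinely forced. This uses the connectedness assumed in Theorem \ref{thmoptshare} together with $n \ge 2$. Beyond this the argument is routine, so I do not expect a real obstacle.
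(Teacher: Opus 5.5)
Your proposal is correct and follows essentially the same route as the paper: compute the entries of $\vec{\hat{A}}$ directly, note that the off-diagonal entries $\hat{c}/\mu_j$ are nonnegative if and only if $\hat{c}\ge 0$ (using $\mu_j>0$ and the existence of an edge), and that the diagonal entries $1-\hat{c}\,d_i/\mu_i$ are nonnegative if and only if $\hat{c}\,d_i/\mu_i\le 1$. Your remark that every vertex has $d_i\ge 1$ spells out the equivalence between $\hat{c}\ge 0$ and the stated lower bound $\hat{c}\,d_i/\mu_i\ge 0$, which the paper leaves implicit when combining the two cases.
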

\begin{proof}
Let $\vec{\hat{A}} = \vec{I}-\hat{c}\vec{LM}^{-1}$, where $\hat{c}$ is defined by \eqref{hatAdef}.
First, we consider the off-diagonal entries of $\vec{\hat{A}}$. When $i \not = j$ we have
$$
(\vec{\hat{A}})_{ij} = (\vec{I} - \hat{c} \vec{LM}^{-1})_{ij} = \begin{cases}
        \frac{\hat{c}}{\mu_j},& \{i,j\}\in E,
        \\0,& \{i,j\}\notin E.
    \end{cases}
$$
Since we assumed the underlying graph is connected and consists of $n \ge 2$ nodes, there is at least one edge, and since the losses are nonnegative random variables with positive variance, each mean $\mu_j > 0$. It follows that the off-diagonal entries of $\vec{\hat{A}}$ are nonnegative if and only if $\hat{c} \ge 0$.
Next, we consider the diagonal entries of $\vec{\hat{A}}$. We have 
$$
(\vec{\hat{A}})_{ii} = (\vec{I} - \hat{c} \vec{LM}^{-1})_{ii} = 1 - \hat{c} \frac{d_i}{\mu_i},
$$
which is nonnegative if and only if $\hat{c} (d_i/\mu_i) \le 1$. Combining the inequalities from the off-diagonal and diagonal cases completes the proof.
\end{proof}

Let $\Cov(X,Y) = \mathbb{E}\left[ \left(X - \mathbb{E}[X] \right) \left( Y - \mathbb{E}[Y] \right) \right]$ denote the covariance of $X$ and $Y$.

\begin{corollary}\label{cstarcovarianceeq}
The condition $\hat{c} \geq 0$ is equivalent to the condition
$$
 \sum_{\{i,j\}\in E}\Cov(X_i,X_j) \left( \frac{1}{\mu_i}+\frac{1}{\mu_j} \right) \leq \sum_{i=1}^n \frac{d_i\sigma_i^2}{\mu_i}.
$$
\end{corollary}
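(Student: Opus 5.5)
Since the denominator of $\hat{c}$ in \eqref{hatAdef} was shown in \eqref{eq:justifytracepos} to be strictly positive, the sign of $\hat{c}$ is the sign of its numerator $\tr(\vec{\Sigma}\vec{L}\vec{M}^{-1})$. The plan is therefore to expand this trace entrywise, using the structure $\vec{L} = \vec{D} - \vec{W}$ from \eqref{Def:graphlapacian}, and then rearrange the inequality $\tr(\vec{\Sigma}\vec{L}\vec{M}^{-1}) \ge 0$ into the stated form.

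First, because $\vec{M}^{-1}$ is diagonal with entries $1/\mu_i$, we have
$$
\tr(\vec{\Sigma}\vec{L}\vec{M}^{-1}) = \sum_{i=1}^n (\vec{\Sigma}\vec{L})_{ii}\,\frac{1}{\mu_i} = \sum_{i=1}^n \frac{1}{\mu_i}\sum_{k=1}^n \Sigma_{ik} L_{ki}.
$$
Next, we substitute $L_{ii} = d_i$, $L_{ki} = -1$ when $\{k,i\}\in E$, and $L_{ki} = 0$ otherwise. This gives
$$
\sum_{i=1}^n \frac{d_i \sigma_i^2}{\mu_i} - \sum_{i=1}^n \frac{1}{\mu_i}\sum_{k:\{k,i\}\in E} \Cov(X_i,X_k),
$$
where $\sigma_i^2 = \Sigma_{ii}$.

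The double sum runs over ordered pairs $(i,k)$ with $\{i,k\}\in E$, so each unordered edge $\{i,j\}$ appears exactly twice, once as $(i,j)$ and once as $(j,i)$. By the symmetry $\Sigma_{ij} = \Sigma_{ji}$, these two appearances contribute $\Cov(X_i,X_j)\left(\tfrac{1}{\mu_i} + \tfrac{1}{\mu_j}\right)$. Hence
$$
\tr(\vec{\Sigma}\vec{L}\vec{M}^{-1}) = \sum_{i=1}^n \frac{d_i \sigma_i^2}{\mu_i} - \sum_{\{i,j\}\in E} \Cov(X_i,X_j)\left(\frac{1}{\mu_i}+\frac{1}{\mu_j}\right).
$$
Requiring this quantity to be nonnegative is exactly the stated inequality, which proves the equivalence.

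No step here is a genuine obstacle. The only point needing care is the reindexing over unordered edges, so that the factor $\tfrac{1}{\mu_i} + \tfrac{1}{\mu_j}$ comes out correctly and no edge is double-counted.
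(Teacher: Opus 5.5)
Your proposal is correct and follows the paper's proof essentially verbatim: both use the positivity of the denominator from \eqref{eq:justifytracepos} to reduce to the sign of $\tr(\vec{\Sigma}\vec{L}\vec{M}^{-1})$, expand it entrywise via $\vec{L}=\vec{D}-\vec{W}$, and rearrange. Your proof is slightly more careful than the paper's in spelling out how the ordered-pair sum over neighbors becomes the sum over unordered edges with weights $\tfrac{1}{\mu_i}+\tfrac{1}{\mu_j}$.
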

\begin{proof}
Recall that
$$
\hat{c} = \frac{\tr \left( \vec{\Sigma} \vec{L} \vec{M}^{-1} \right)}{\tr \left( \vec{L} \vec{M}^{-1} \vec{\Sigma} \vec{M}^{-1} \vec{L} \right)}.
$$
The term in the denominator of this expression
is strictly positive by 
\eqref{eq:justifytracepos}. In the numerator, we have $$\tr(\vec{\Sigma L M}^{-1}) = \sum_{i=1}^n \frac{d_i\sigma^2_i-\sum_{j : \{i,j\} \in E}\Cov(X_i,X_j)}{\mu_i}.$$ 
For this sum to be nonnegative, we require 
$$
\sum_{\{i,j\}\in E}\Cov(X_i,X_j)\left(\frac{1}{\mu_i}+\frac{1}{\mu_j} \right) \leq \sum_{i=1}^n \frac{d_i\sigma_i^2}{\mu_i},
$$
as was to be shown.
\end{proof}

\begin{remark}
As a consequence of Corollary \ref{cstarcovarianceeq}, we note that a sufficient condition for the nonnegativity of $\hat{c}$ is that 
for all $\{i,j\}\in E,$ $\Cov(X_i,X_j)\leq \min\{\sigma_i^2,\sigma_j^2\}.$ 
\end{remark}

The above corollary and remark demonstrate how the nonnegativity of off-diagonal entries is dependent on covariance relative to expected losses.
More specifically, the quantity 
${\tr \left( \vec{L} \vec{M}^{-1} \vec{\Sigma} \vec{M}^{-1} \vec{L} \right)}$,
shown to be positive in 
\eqref{eq:justifytracepos},
that appears in the denominator of $\hat{c}$, can be interpreted as the square of the {\it{Network Coefficient of Variation}}
(NCV) as it generalizes the corresponding standard notion for scalar random variables.  One can interpret the conditions for nonnegativity in Lemma \ref{FriendsEqualPos} as stating that for each agent, their own {\it{network adjusted coefficient of variation}} defined as 
$d_i \tr\left(\vec\Sigma\vec L \vec M^{-1}\right) \mu_i^{-1}$ 
needs to be nonnegative and at most the square of the NCV.

In particular, we note the following result for a simple 2-agent network as it will serve as the basis for some examples in \S \ref{sec:examples}.  Here, a 2-agent network has graph $G = (V,E),$ with vertices $V = \{1,2\}$ and edges $E = \{ \{1,2\} \}.$

\begin{corollary} \label{cor:cor2agent}
    In the case of a 2-agent network, 
   $\vec{\hat{A}}$ has all nonnegative entries if and only if $\hat{c}\leq \mu_i$ for $i \in\{1,2\}$ and $$\Cov(X_1,X_2) \leq \frac{\sigma_1^2\mu_2+\sigma_2^2\mu_1}{\mu_1+\mu_2}.$$
\end{corollary}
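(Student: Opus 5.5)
This is a direct specialization of Lemma \ref{FriendsEqualPos} and Corollary \ref{cstarcovarianceeq} to $n=2$, $E=\{\{1,2\}\}$. The plan is to check the two halves of the condition in Lemma \ref{FriendsEqualPos} separately.

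First, the diagonal condition. Both vertices have degree $d_1=d_2=1$, so the requirement $\hat{c}\, d_i/\mu_i \le 1$ becomes $\hat{c} \le \mu_i$ for $i\in\{1,2\}$. This uses $\mu_i>0$, as established in \S\ref{sec:prelmin}. This gives the first stated condition directly.

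Second, the sign condition $\hat{c}\ge 0$. By Corollary \ref{cstarcovarianceeq} it is equivalent to
$$
\Cov(X_1,X_2)\left(\frac{1}{\mu_1}+\frac{1}{\mu_2}\right) \le \frac{\sigma_1^2}{\mu_1}+\frac{\sigma_2^2}{\mu_2},
$$
since the edge sum has the single term $\{1,2\}$ and $d_1=d_2=1$. Multiplying both sides by the positive quantity $\mu_1\mu_2$ gives
$$
\Cov(X_1,X_2)(\mu_1+\mu_2)\le \sigma_1^2\mu_2+\sigma_2^2\mu_1 .
$$
Dividing by $\mu_1+\mu_2>0$ yields the stated covariance bound. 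Every step is reversible because only positive factors were used, so this is an equivalence.

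Combining the two conditions with Lemma \ref{FriendsEqualPos} (a $2$-vertex graph with one edge is connected, so the lemma applies) gives the claimed characterization. Nothing here is a real obstacle. The only point to state explicitly is that the inequality $0\le \hat{c}\,d_i/\mu_i$ in Lemma \ref{FriendsEqualPos} is exactly $\hat{c}\ge 0$ because $d_i/\mu_i>0$, which is what lets Corollary \ref{cstarcovarianceeq} replace it.
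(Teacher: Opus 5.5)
Your proposal is correct and follows the same route as the paper, which simply states that the corollary is an immediate consequence of Lemma~\ref{FriendsEqualPos} and Corollary~\ref{cstarcovarianceeq}. Your write-up fills in those details correctly: you set $d_1=d_2=1$, observe that the lower bound in the lemma is exactly $\hat{c}\ge 0$, and clear the positive denominators $\mu_1\mu_2$ and $\mu_1+\mu_2$.
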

Corollary \ref{cor:cor2agent} is an immediate consequence of Lemma \ref{FriendsEqualPos} and Corollary \ref{cstarcovarianceeq}.

\subsection{Examples where optimal matrices have negative entries} \label{sec:examples}
In this section, we provide some examples where optimal risk-sharing matrices $\vec{A}_*$ and $\vec{\hat{A}}$ have entries that are negative.  Additionally, we illustrate how network-based restrictions can eliminate negative entries in some cases.

\subsubsection{ Agents with losses with means at different scales} \label{examplenegative}
First, we consider an example where a negative entry appears due to the conditions of Lemma \ref{MuConditions} being violated. In particular, we consider a complete graph on three vertices with losses whose covariance matrix is the identity, but that have means at different scales. We have
$$
\vec{\mu} = \begin{bmatrix}
\frac{1}{4} \\
1 \\ 
4
\end{bmatrix} 
\quad \vec{\Sigma} = \vec{I} \quad
\raisebox{-.45\height}{\begin{tikzpicture}[scale=.75, every node/.style={circle, draw, fill=white, inner sep=2pt}]

\node (1) at (1,1) {1};
\node (2) at (2,-1) {2};
\node (3) at (0,-1) {3};

\draw (1) -- (2) -- (3) -- (1);

\end{tikzpicture}}
\quad
\vec{A}_* = \left[\begin{array}{rrr}
\frac{85}{273} & \frac{67}{273} & -\frac{5}{273} \\[2pt] \frac{88}{273} & \frac{79}{273} & \frac{43}{273} \\[2pt] \frac{100}{273} & \frac{127}{273} & \frac{235}{273}
\end{array} \right],
$$
and $\frac{1}{2}\tr(\vec{A}_*\vec\Sigma \vec{A}_*^\top) = \frac{19}{26} \approx0.731$. Recall that, when the covariance matrix $\vec\Sigma$ is a multiple of the identity, we can apply Lemma \ref{MuConditions}, and thus the entries of $\vec{A}_*$ are nonnegative if and only if 
$$
\|\vec\mu-\underline{\vec\mu}\|_1 \|\vec\mu\|_\infty \le \|\vec\mu\|_2^2
\quad \text{where} \quad \underline{\vec\mu} := \left(\min_{1\le j \le n} \mu_j\right) \vec{1}.
$$
Here, 
$$
\|\vec\mu-\underline{\vec\mu}\|_1 \|\vec\mu\|_\infty =
\left\|\vec\mu-\frac{1}{4}\vec1\right\|_1 (4) = 18 >  17.0625
 = \frac{273}{16}=\|\vec\mu\|_2^2,
$$
so the necessary and sufficient condition of Lemma \ref{MuConditions} is not met.

\subsubsection{Restricting risk-sharing to maintain nonnegativity}
\label{sec:disconnnect}
By adding the restriction that agents $1$ and $3$ cannot exchange risk, we
can eliminate the negative entry from the previous example. We have
$$
\vec{\mu} = \begin{bmatrix}
\frac{1}{4} \\
1 \\ 
4
\end{bmatrix} 
\quad \vec{\Sigma} = \vec{I} \quad
\raisebox{-.45\height}{\begin{tikzpicture}[scale=.75, every node/.style={circle, draw, fill=white, inner sep=2pt}]

\node (1) at (1,1) {1};
\node (2) at (2,-1) {2};
\node (3) at (0,-1) {3};

\draw (1) -- (2) -- (3);
\end{tikzpicture}}
\quad
\vec{A}_*= \begin{bmatrix}\frac{18}{38}&\frac{5}{38}&0\\[2pt]
\frac{20}{38}&\frac{13}{38}&\frac{5}{38}\\[2pt]
0&\frac{20}{38}&\frac{33}{38}\end{bmatrix},
$$
and $\frac{1}{2}\tr(\vec{A}_*\vec\Sigma \vec{A}_*^\top) = \frac{16}{19}
\approx 0.842$. Note that since we have restricted the choices of $\vec{A}_*$, the objective function
$\frac{1}{2}\tr(\vec{A}_*\vec\Sigma \vec{A}_*^\top)$
has slightly increased, but now we maintain nonnegativity.

\subsubsection{Sharing risk equally among friends to maintain nonnegativity}
Here, we show a different way to achieve nonnegativity 
that does not require altering the network structure as in \S \ref{sec:disconnnect}. Instead, we require friends take an equal share of risk (as in Definition \ref{assumption2}).
Here, $\vec\mu,$ $\vec\Sigma,$ and the network structure are the same as in \S \ref{examplenegative}, and we use Theorem \ref{thmoptshare} to compute the optimal risk-sharing matrix where friends take an equal share of risk. For this example, we have 
$$
\vec{\mu} = \begin{bmatrix}
\frac{1}{4} \\
1 \\ 
4
\end{bmatrix} 
\quad \vec{\Sigma} = \vec{I} \quad
\raisebox{-.45\height}{\begin{tikzpicture}[scale=.75, every node/.style={circle, draw, fill=white, inner sep=2pt}]

\node (1) at (1,1) {1};
\node (2) at (2,-1) {2};
\node (3) at (0,-1) {3};

\draw (1) -- (2) -- (3) -- (1);

\end{tikzpicture}}
\vec{\hat{A}} = \begin{bmatrix}
    \frac{7}{39}&\frac{4}{39}&\frac{1}{39}\\[2pt]
    \frac{16}{39}&\frac{31}{39}&\frac{1}{39}\\[2pt]
    \frac{16}{39}&\frac{4}{39}&\frac{37}{39}
\end{bmatrix}\quad
\hat{c}=\frac{4}{39},
$$
and  $\frac{1}{2}\tr(\vec{\hat{A}}\vec\Sigma\vec{\hat{A}}^\top) = \frac{25}{26} \approx 0.962.$
Note that although this network fails to meet the nonnegativity conditions as stated in Lemma \ref{MuConditions}, it satisfies those in Lemma \ref{FriendsEqualPos}.

While the condition that friends take an equal share of risk prevents a negative entry in this example, next we demonstrate two cases where friends take an equal share of risk, but the conditions of Lemma \ref{FriendsEqualPos} fail, which  results in negative entries in $\vec{\hat{A}}.$

\subsubsection{Negative off-diagonal entries}
Here we show an example where $\hat{c}$ is negative, and the condition of Corollary \ref{cor:cor2agent} is violated, which results in a negative entry for $\vec{\hat{A}}$. Here, we have
$$
\vec\mu=\begin{bmatrix}
    1\\5
\end{bmatrix} \quad 
\vec\Sigma = \begin{bmatrix}
    1&3\\[2pt]
    3&\frac{19}{2}
\end{bmatrix} \quad
\hat{c} = -\frac{35}{18} \quad
\vec{\hat{A}} = \begin{bmatrix}
    \frac{53}{18}&-\frac{7}{18}\\[2pt]
    -\frac{35}{18}&\frac{25}{18}
\end{bmatrix},
$$
and $\frac{1}{2}\tr(\vec{\hat{A}}\vec\Sigma\vec{\hat{A}}^\top) = \frac{329}{72}\approx 4.57.$
Indeed, referring to Corollary \ref{cor:cor2agent},
$$\Cov(X_1,X_2) = 3 > \frac{29}{12} = \frac{\sigma_1^2\mu_2+\sigma_2^2\mu_1}{\mu_1+\mu_2}.$$
Since the covariance value exceeds the bounds for a nonnegative $\hat{c},$ the off-diagonal entries are negative (see the Proof of Lemma \ref{FriendsEqualPos}).

\subsubsection{Negative diagonal entry}
Here we show an example where $\hat{c} > \frac{\mu_1}{d_1},$ resulting in a negative diagonal entry.
We have:
$$
\vec\mu=\vec1\quad \vec\Sigma = \diag(1,8,8,8) \quad
\raisebox{-.45\height}{\begin{tikzpicture}[scale=.75, every node/.style={circle, draw, fill=white, inner sep=2pt}]

\node (1) at (0,1) {1};
\node (2) at (2,1) {2};
\node (3) at (2,-1) {3};
\node (4) at (0,-1) {4};

\draw (1) -- (2);
\draw (3) -- (1);
\draw (1) -- (4);

\end{tikzpicture}}
\hat{c} = \frac{9}{20}\quad
\vec{\hat{A}} = \begin{bmatrix}
    -\frac{7}{20}&\frac{9}{20}&\frac{9}{20}&\frac{9}{20}\\[2pt]
    \frac{9}{20}&\frac{11}{20}&0&0\\[2pt]
    \frac{9}{20}&0&\frac{11}{20}&0\\[2pt]
    \frac{9}{20}&0&0&\frac{11}{20}
\end{bmatrix},
$$
and  $\frac{1}{2}\tr(\vec{\hat{A}}\vec\Sigma\vec{\hat{A}}^\top) =\frac{257}{40} = 6.425$. 
Indeed, when $i=1$ 
$$
\frac{\mu_i}{d_i} = \frac{1}{3} < \hat{c} = 
\frac{9}{20}
$$
so $\vec{\hat{A}}$ has a negative diagonal entry.

\subsection{Barbell network example}
\label{sec:barbell}
Here, we illustrate how organizing agents in a network structure where agents are connected if they have similar expected losses can avoid negative entries in the optimal risk-sharing matrix. Suppose that there are $6$ agents whose random losses have mean vector and covariance matrix
$$
\vec\mu = \begin{bmatrix}
    1&1&4&16&64&64
\end{bmatrix}^\top\quad
\text{and} \quad
\vec\Sigma = \vec I,
$$
respectively. First, we assume all agents are allowed to share risk in a fully-connected network, and we use Theorem \ref{thm:feng} to compute the optimal risk-sharing matrix $\vec{A}_*$, see Figure \ref{fig:barbelldensitycompletevbarbell} (left). 
\begin{figure}[h!]
\centering
\includegraphics[width=0.47\linewidth]{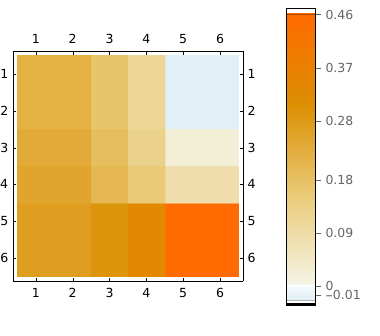}
\includegraphics[width=0.47\linewidth]{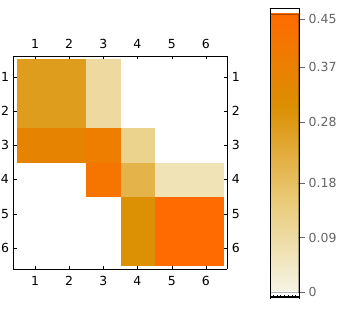}
\caption{A heat map visualization of the optimal $\vec{A}_*$ for a fully-connected network (left), and $\vec{A}_*$ for the barbell network (right). }
    \label{fig:barbelldensitycompletevbarbell}
\end{figure}

Observe that negative entries arise in the matrix locations corresponding to the risk exchange between the agents 
with mean $1$ and mean $64$ losses. Next, we restrict risk-sharing to the following barbell network
$$
\raisebox{-.5\height}{
\begin{tikzpicture}[scale=1, every node/.style={circle, draw, fill=white, inner sep=2pt}]
  \node (a1) at (0,0) {2};
  \node (a2) at (0,1) {1};
  \node (a3) at (1,0.5) {3};
  \draw (a1) -- (a2) -- (a3) -- (a1);

  \node (b1) at (3,0) {6};
  \node (b2) at (3,1) {5};
  \node (b3) at (2,0.5) {4};
  \draw (b1) -- (b2) -- (b3) -- (b1);

  \draw (a3) -- (b3);
\end{tikzpicture}} 
$$
and use Theorem \ref{MainResults} to compute the optimal risk-sharing matrix $\vec{A}_*$, where only friends in the barbell network share risk, see Figure \ref{fig:barbelldensitycompletevbarbell} (right). 
In this barbell network, agents are connected if their expected losses are within a factor of four of each other, which, in this case, eliminates the negative entries.
This example motivates further study both of nonnegativity conditions for network-based risk-sharing as well as processes for constructing risk-sharing networks based on the distribution of agents' losses, see \S \ref{sec:discussion} for further discussion.

\begin{remark}
As noted in Section \ref{sec:nonneg}, the non-negativity conditions established in Lemma 
\ref{MuConditions}, Proposition \ref{lem:gencase},  
Lemma \ref{FriendsEqualPos}, and Corollary \ref{cstarcovarianceeq} involve comparisons between statistical quantities of individuals with global counterparts.
The examples in Sections \ref{sec:disconnnect} and \ref{sec:barbell} show that network restrictions can eliminate these negative entries. Informally speaking, non-negativity is achieved by restricting risk sharing between agents whose expected losses differ substantially.

Our numerical examples suggest that when economic agents engage in a risk-sharing network with connections between agents with similar risk profiles, as quantified in Section \ref{sec:nonneg},
the negative entries are eliminated. Such a network organization may also have implications for individual incentives to participate in risk-sharing for risk-averse agents, see Section \ref{sec:discussion} for further discussion.
\end{remark}

\section{Proof of main result} \label{sec:proofmainresult}
This section is organized as follows. First, in \S \ref{sec:proofnotation}, 
we introduce notation. Second, in \S \ref{sec:proofthm}, we prove Theorem \ref{MainResults}.

\subsection{Notation} \label{sec:proofnotation}
We introduce some tensor algebra notation used in the proofs below. The tensor product $\vec{A} \otimes \vec{B}$ of two matrices $\vec{A}$ and $\vec{B}$ of respective sizes $m\times n$ and $ p\times q$ is the $mp\times nq$ matrix
$$\vec{A}\otimes \vec{B} = \begin{bmatrix}
    a_{11}\vec{B}&\dots & a_{1n}\vec{B}\\
    \vdots &\ddots & \vdots\\
    a_{m1}\vec{B}&\dots & a_{mn}\vec{B}
\end{bmatrix}.$$
Let $\vec{C}$ and $\vec{D}$ be matrices of dimensions $m\times n$ and $q\times n$, respectively. The (column) concatenation $\vec C\oplus \vec D$ is the $(m+q)\times n$ matrix
$$
\vec{C}\oplus \vec{D} = \begin{bmatrix}
    \vec{C}\\\vec{D}
\end{bmatrix}.
$$
Let $\vec{E} = (e_{i j})$ be an $m\times n$ matrix. The (column) vectorization of $\vec{E},$ denoted $\text{vec}(\vec E),$ is the $mn$-dimensional column vector 
$$
\mathrm{vec}(\vec{E}) = \begin{bmatrix}
    e_{11}& e_{21}& \cdots \; e_{m1}&e_{12}&\dots &e_{m2} & \dots & e_{1n} & \dots & e_{mn}
\end{bmatrix}^\top.
$$
\normalsize
If $\vec{F} = (f_{i j})$ and $\vec{G} = (g_{i j})$ are $m \times n$ matrices, then $\vec{F} \odot \vec{G}$ denotes the entrywise product whose $(i,j)$-th entry is
$$
(\vec{F} \odot \vec{G})_{i j} = f_{i j} g_{i j}.
$$

\subsection{Proof of Theorem \ref{MainResults}} \label{sec:proofthm}
The proof of Theorem \ref{MainResults} is divided into three steps. First, in Step \ref{step1}, we transform the optimization problem \eqref{opteq} into a quadratic program that only has equality constraints. Second, in Step \ref{step2}, we justify  that this quadratic program has a unique solution determined by the KKT conditions. Third, in Step \ref{step3}, we rewrite the KKT conditions for the transformed problem in the notation of the original optimization problem.

Recall that $\vec{X}$ is the nonnegative $n$-dimensional random vector with mean $\vec{\mu}$ and covariance matrix $\vec{\Sigma}$, and that $G = (V,E)$ is an undirected graph whose vertices $V = \{1,\ldots,n\}$ correspond to agents. Let $\vec{W}$ denote the adjacency matrix of $G$. Set 
$$
\vec{Z} = \vec{1} \vec{1}^\top - \vec{W} - \vec{I},
$$
to be an indicator for the absence of an edge. Then, the optimization problem \eqref{opteq} can be rewritten as
\begin{equation}
\label{opteqZ}
\begin{cases}
\text{minimize} & \frac{1}{2}\tr(\vec{A} \vec{\Sigma} \vec{A}^\top) \\
\text{subject to} & \vec{A} \vec{\mu}  = \vec{\mu} , \quad \vec{1}^\top \vec{A} = \vec{1}^\top, \quad 
\vec{A} \odot \vec{Z} = \vec{0},
\end{cases}
\end{equation}
where $\vec{A} \odot \vec{Z}$ enforces the constraint that only friends can share risk. The following result rewrites \eqref{opteqZ} as a quadratic program that only has equality constraints.

\begin{step} \label{step1}
Set
$$ 
\vec{x} :=\mathrm{vec}(\vec{A}),
\qquad 
\vec Q:=\vec{\Sigma}\otimes \vec{I}, \qquad \text{and} \qquad
\vec{c}:=\vec{\mu}\oplus \vec{1}_n\oplus \vec{0}_m,
$$
where $\vec{1}_n$ denotes an $n$-dimensional column vector of ones, and $\vec{0}_m$ denotes an $m$-dimensional column vector of zeros.
Define $\vec{B} := \vec{B}_{\vec{\mu}} \oplus \vec{B}_{\vec{1}} \oplus \vec{B}_{\vec{0}}$, where
$$
\vec{B}_{\vec{\mu}} := \left(\bigoplus_{i=1}^n \mu_i\vec{I}\right)^\top, \quad \vec{B}_{\vec{1}} := \bigoplus_{i=1}^n \left(\vec{0}_{(i-1)n}\oplus \vec{1}_n \oplus \vec0_{n^2-in} \right)^\top, \quad \vec{B}_{\vec{0}} := \bigoplus_{i=1}^m \vec{e}_{j_i}^\top,
$$
where $\vec{e}_i$ is the $i$-th standard basis vector of dimension $n^2$,  and
$j_1,\ldots,j_m$ are the indices of the nonzero entries of $\mathrm{vec}(\vec{Z})$. 
Then, the optimization problem
\begin{equation} \label{quadprogram}
\begin{cases}
\text{minimize} & \frac{1}{2}\vec{x}^\top\vec{Q} \vec{x} \\
\text{subject to} & \vec{Bx}=\vec{c}
\end{cases}
\end{equation}
is equivalent to \eqref{opteqZ} in the sense that if $\vec{x}_* = \mathrm{vec}(\vec{A}_*)$, then $\vec{x}_*$ is in the optimal set of \eqref{quadprogram} if and only if $\vec{A}_*$ is in the optimal set of \eqref{opteqZ}.
\end{step}

\begin{proof}[Proof of Step \ref{step1}]
First, we show that 
$\vec{x}^\top \vec{Q} \vec{x} = \tr( \vec{A} \vec{\Sigma} \vec{A}^\top).$
By the definition of $\vec{x}$ and $\vec{Q}$, we have
$$
\vec{x}^\top \vec{Q} \vec{x} = \sum_{k_1,k_2=1}^{n^2} \vectorize(\vec{A})_{k_1} (\vec{\Sigma} \otimes \vec{I})_{k_1 k_2} \vectorize(\vec{A})_{k_2}.
$$
By writing $k_1 = i_1 + n(j_1-1)$ and $k_2 = i_2 + n(j_2-1)$, it follows from the definition of vectorization and the tensor product that
$$
\sum_{k_1,k_2=1}^{n^2} \vectorize(\vec{A})_{k_1} (\vec{\Sigma} \otimes \vec{I})_{k_1 k_2} \vectorize(\vec{A})_{k_2} 
 = \sum_{i_1,j_1,i_2,j_2=1}^n  a_{i_1 j_1} \sigma_{j_1 j_2} \delta_{i_1 i_2}  a_{i_2 j_2},
$$
where $a_{i j}$, $\sigma_{i j}$, and $\delta_{i j}$ denote the entries of $\vec{A}$, $\vec{\Sigma}$, and $\vec{I}$, respectively. Using the fact that $\delta_{i j} = 1$ when $i=j$ and $\delta_{i j}=0$ otherwise gives
$$
\sum_{i_1,j_1,i_2,j_2=1}^n  a_{i_1 j_1} \sigma_{j_1 j_2} \delta_{i_1 i_2}  a_{i_2 j_2} = \sum_{i_1,j_1,j_2=1}^n  a_{i_1 j_1} \sigma_{j_1 j_2}  a_{i_1 j_2} = \tr( \vec{A} \vec{\Sigma} \vec{A}^\top),
 $$
 where the final equality follows from the definition of matrix multiplication and the trace.

Next, we will show the equivalence of the constraints.
Recall that 
$$
\vec{B} := \vec{B}_{\vec{\mu}} \oplus \vec{B}_{\vec{1}} \oplus \vec{B}_{\vec{0}}
\quad \text{and} \quad
\vec{c}=\vec{\mu}\oplus \vec{1}_n\oplus \vec{0}_m.
$$
First, we will show the constraint $\vec{B}_{\vec{\mu}} \vec{x} = \vec{\mu}$ is equivalent to $\vec{A} \vec{\mu} = \vec{\mu}$ by showing $\vec{B}_{\vec{\mu}} \vec{x} = \vec{A} \vec{\mu}$. Fix $i \in \{1,\ldots,n\}$. We have
$$
(\vec{B}_{\vec{\mu}} \vec{x})_i = \sum_{k_1=1}^{n^2} (\vec{B}_{\vec{\mu}})_{i,k_1} x_{k_1}.
$$
If we write $k_1 = i_1 + n(j_1 -1)$, then by the definition of $\vec{B}_{\vec{\mu}}$ we have
$\vec{B}_{\vec \mu} = [\mu_1 \vec I, \ldots, \mu_n \vec I].$ Thus,
$$
\sum_{k_1=1}^{n^2} (\vec{B}_{\vec{\mu}})_{i,k_1} x_{k_1} = \sum_{i_1,j_1=1}^n 
\delta_{{i i_1}} \mu_{{j_1}}
a_{i_1 j_1} 
$$
where $\delta_{i j},a_{i j}$ denote the entries of $\vec{I}, \vec{A}$, respectively. Using the fact that  $\delta_{i j} = 1$ if $i=j$ and $\delta_{i j} = 0$ otherwise gives
$$
\sum_{i_1,j_1=1}^n 
\delta_{{i i_1}} \mu_{{j_1}}
a_{i_1 j_1} 
= 
\sum_{{j_1}=1}^n a_{i {j_1}}  \mu_{{j_1}}   = (\vec{A} \vec{\mu})_i,
$$
which establishes the equivalence to the first constraint. Second, we show that $\vec{B}_{\vec{1}} \vec{x} = \vec{1}_n$ is equivalent to $\vec{1}_n^\top \vec{A} = \vec{1}_n^{{\top}}$ by showing $\vec{B}_{\vec{1}} \vec{x} = \vec{A}^\top \vec{1}_n$. Fix an index $i \in \{1,\ldots,n\}$. We have 
$$
(\vec{B}_{\vec{1}} \vec{x})_i = \sum_{k_1{=1}}^{n^2} \left( \vec{B}_{\vec{1}} \right)_{i,k_1} x_{k_1}.
$$
By writing $k_1 = i_1 + n(j_1 -1)$ and using the definition of  $\vec{B}_{\vec{1}}$, we have
$$
\sum_{k_1 {=1}}^{n^2} \left( \vec{B}_{\vec{1}} \right)_{i,k_1} x_{k_1} = \sum_{i_1,{j_1}=1}^n \delta_{i j_1} a_{i_1 j_1},
$$
where $\delta_{i j} = 1$ if $i=j$ and $\delta_{i j}=0$ otherwise, and $a_{i j}$ are the entries of $\vec{A}$. Since
$$
\sum_{i_1,{j_1}=1}^n \delta_{i j_1} a_{i_1 j_1} = \sum_{i_1=1}^n a_{i_1 i} = (\vec{A}^\top \vec{1})_i,
$$
the equivalence of the second constraint is established. Finally, we show that
$\vec{B}_{\vec{0}} \vec{x} = \vec{0}_m$ is equivalent to $\vec{A} \odot \vec{Z} = \vec{0}$. 
The entrywise product in the final constraint $\vec{A} \odot \vec{Z} = \vec{0}$ can be directly vectorized as
$$
\vectorize(\vec{Z}) \odot \vectorize(\vec{A}) = \vectorize(\vec{Z}) \odot \vec{x} = \vectorize(\vec{0}).
$$
Recall that $j_1,\ldots,j_m$ are the indices of the nonzero entries of $\vec{Z}$, so 
$\vectorize(\vec{Z}) \odot \vec{x} = \vectorize(\vec{0})$ is equivalent to
$x_{j_i} = 0$ for $i \in \{1,\ldots,m\}$. Since $(\vec{B}_{\vec{0}} \vec{x})_i = x_{j_i}$, the equivalence of the final constraint is established, which completes the proof.
\end{proof}

\begin{step}\label{step2}
In addition to the notation introduced in Step \ref{step1}, assume that $\vec{\Sigma}$ is positive definite. Then, the quadratic program 
\begin{equation} \label{eq:minproblemquad}
\begin{cases}
\text{minimize} & \frac{1}{2}\vec{x}^\top\vec{Q}\vec{x} \\
\text{subject to} & \vec{Bx}=\vec{c}
\end{cases}
\end{equation}
has a unique solution $\vec{x}_*$ characterized by the KKT conditions
\begin{equation} \label{eq:kkt}
\begin{bmatrix}
\vec{Q} & \vec{B}^\top \\
\vec{B} & \vec{0}
\end{bmatrix}
\begin{bmatrix}
\vec{x}_* \\
\vec{\nu}_*
\end{bmatrix}
= \begin{bmatrix}
\vec{0} \\
\vec{c}
\end{bmatrix},
\end{equation}
where $\vec{\nu}_*$ are Lagrange multipliers that exist but need not be unique.
\end{step}

\begin{proof}[Proof of Step \ref{step2}] 
First, we argue that this optimization has a unique solution. Since the admissible set of points that satisfy the constraints contains $\vectorize(\vec{I})$, the optimal set is nonempty. Recall that if the objective function in a convex optimization problem is strictly convex, the optimal set contains at most one point, see \cite[Section 4.2]{boyd2004convex}.
Since $\vec\Sigma$ and $\space \vec{I}$ are both positive definite, their tensor product $\vec Q = \vec\Sigma \otimes \vec{I}$ is positive definite since the eigenvalues of 
$\vec\Sigma \otimes \vec{I}$ are products of the eigenvalues of $\vec{\Sigma}$ and $\vec{I}$. It follows that $\frac{1}{2} \vec{x}^\top \vec{Q} \vec{x}$ is strictly convex, and thus \eqref{eq:minproblemquad} has a unique solution.

Second, we justify that the KKT conditions 
\eqref{eq:kkt} characterize the solution. The fact that the KKT conditions for the optimization can be written as \eqref{eq:kkt} follows from \cite[Example 5.1]{boyd2004convex}. The objective function is convex and differentiable, and the problem does not have inequality constraints. Therefore, the KKT conditions are necessary and sufficient conditions for $(\vec{x}_*, \vec{\nu}_*)$ to be primal and dual optimal, with zero duality gap, see \cite[Page 244]{boyd2004convex}.
Since the KKT conditions are necessary and sufficient conditions, Lagrange multipliers $\vec{\nu}_*$ such that $(\vec{x}_*,\vec{\nu}_*)$ satisfies \eqref{eq:kkt} must exist, but the Lagrange multipliers need not be unique.
\end{proof}

Finally, to complete the proof of Theorem \ref{MainResults}, we write the KKT conditions \eqref{eq:kkt}  of the transformed problem using the notation of the original optimization problem.

\begin{step}\label{step3}
The optimization problem
\begin{equation}
\label{opteqZ2}
\begin{cases}
\text{minimize} & \frac{1}{2}\tr(\vec{A} \vec{\Sigma} \vec{A}^\top) \\
\text{subject to} & \vec{A} \vec{\mu}  = \vec{\mu} , \quad \vec{1}^\top \vec{A} = \vec{1}^\top, \quad 
\vec{A} \odot \vec{Z} = \vec{0},
\end{cases}
\end{equation}
has a unique solution 
$$
 \vec{A}_* = \frac{1}{n} \vec{1 1}^\top  + \left(\vec I - \frac{1}{n} \vec{1 1}^\top \right)
\left( \frac{1}{a} \vec{\mu \mu}^\top + \vec{\Gamma} \left(
\frac{1}{a} \vec\Sigma^{-1}\vec{\mu \mu}^\top
 - \vec I \right) \right) \vec\Sigma^{-1},
$$ 
where $a = \vec\mu^\top \vec\Sigma^{-1}\vec\mu$ and $\vec{\Gamma} = (\gamma_{ij}) \in \mathbb{R}^{n \times n}$ 
satisfies $\gamma_{ij} = 0$ when $i=j$ or $\{i,j\} \in E$, and the other entries $\gamma_{ij}$ 
are chosen so that the equations
\begin{equation*} 
 \left( 
  \frac{1}{n} \vec{1 1}^\top   +
 \left(\vec I - \frac{1}{n} \vec{1 1}^\top \right)
 \left(
   \frac{1}{a} \vec{\mu \mu}^\top +
 \vec{\Gamma}
\left(\frac{1}{a} \vec\Sigma^{-1}\vec{\mu \mu}^\top
 - \vec I \right)\right)   \vec\Sigma^{-1} 
 \right)_{ij}
 = 0
 \end{equation*}
are satisfied for all $i,j$ such that $\{i,j\} \in \overline{E}$. Such a matrix $\vec{\Gamma}$ always exists but need not be unique.
\end{step}

\begin{proof}[Proof of Step \ref{step3}]
The KKT conditions \eqref{eq:kkt} from Step \ref{step2} can be written as
$$
\vec{Q} \vec{x}_* + \vec{B}^\top \vec{\nu}_* = \vec{0}, \quad \text{and} \quad \vec{B} \vec{x}_* = \vec{c} {,}
$$
where $\vec{x}_*$ is the unique solution to \eqref{eq:minproblemquad}, and 
$\vec{\nu}_*$ are Lagrange multipliers that exist but need not be unique.
Let $\vec{A}_*$ be the $n \times n$ matrix such that $\vec{x}_* = \vectorize( \vec{A}_*)$. Previously, in the Proof of Step \ref{step1}, we established the equivalence of the constraint $\vec{B} \vec{x}_* = \vec{c}$ to the three constraints:
$$
\vec{A}_{*} \vec{\mu}  = \vec{\mu} , \quad \vec{1}^\top \vec{A}_{*} = \vec{1}^\top, \quad 
\vec{A}_{*} \odot \vec{Z} = \vec{0}.
$$
Next, we rewrite the equation $\vec{Q} \vec{x}_* + \vec{B}^\top \vec{\nu}_* = \vec{0}$ in terms of $\vec{A}_{*}$.  Next, introduce a convenient notation for the Lagrange multipliers $\vec{\nu}_*$
$$
\vec{\nu}_* = (-\vec{\beta}) \oplus (-\vec{\lambda}) \oplus \vec{\gamma},
$$
where $\vec{\beta} \in \mathbb{R}^n$, $\vec{\lambda} \in \mathbb{R}^n$, and $\vec{\gamma} \in \mathbb{R}^m$, where $m$ is the number of nonzero entries of $\vec{Z}$. Fix $k_1 \in \{1,\ldots,n^2\}$. If we write $k_1 = i_1 + n(j_1 -1)$ for $i_1,j_1 \in \{1,\ldots,n\}$, then by the definition of $\vec{Q}$ and $\vec{B}$,
$$
(\vec{Q} \vec{x}_*)_{k_1} + (\vec{B}^\top \vec{\nu}_*)_{k_1} = 
\left(\vec{A}_{*} \vec{\Sigma}-\vec{1\lambda}^\top-\vec{\beta\mu}^\top + \vec{\Gamma} \right)_{i_1 j_1},
$$
where $\vec{\Gamma} = (\gamma_{ij}) \in \mathbb{R}^{n \times n}$ is defined as follows: we have $\gamma_{i j} = 0$ when $i=j$ or $\{i,j\} \in E$, and the other $m$ entries of $\vec{\Gamma}$ each correspond to one of the $m$ entries of $\vec{\gamma}$.
With this notation, it follows that $\vec{Q} \vec{x}_* + \vec{B}^\top \vec{\nu}_* = \vec{0}$
is equivalent to 
$$
\vec{A}_{*} \vec{\Sigma}-\vec{1\lambda}^\top-\vec{\beta\mu}^\top + \vec\Gamma = \vec{0}.
$$
Solving for $\vec{A}_{*}$ gives
\begin{equation} \label{eqA1}
\vec{A}_{*} = (\vec{1\lambda}^\top+\vec{\beta\mu}^\top - \vec\Gamma)\vec\Sigma^{-1}.
\end{equation}
Using the constraint $\vec1^\top \vec{A}_{*} = \vec1^\top$ and solving for $\vec\lambda^\top$ yields
$$
\vec\lambda^\top = \frac{1}{n}(\vec1^\top\vec\Sigma-\vec1^\top\vec{\beta\mu}^\top + \vec1^\top \vec{\Gamma}).
$$
Substituting this formula for $\vec\lambda^\top$ into \eqref{eqA1}, we obtain
\begin{equation} \label{eqA2}
\vec{A}_{*} = \frac{1}{n} \vec{1 1}^\top  + \left(\vec I - \frac{1}{n} \vec{1 1}^\top \right)\vec{\beta \mu}^\top\vec\Sigma^{-1} -  \left(\vec I - \frac{1}{n} \vec{1 1}^\top \right) \vec\Gamma\vec\Sigma^{-1}.
\end{equation}
Using the constraint $\vec{A}_{*} \vec{\mu} =  \vec\mu$ and solving for $\vec{\beta}$ gives
$$
\vec\beta = (\vec\mu+\vec\Gamma \vec\Sigma^{-1}\vec\mu)(\vec\mu^\top\vec\Sigma^{-1}\vec\mu)^{-1}+ c \vec1,
$$
where $c$ is some scalar. If we define $a := \vec\mu^\top\vec\Sigma^{-1}\vec\mu$, then
$$
\vec\beta = \frac{1}{a} (\vec\mu+\vec\Gamma \vec\Sigma^{-1}\vec\mu)+ c \vec1.
$$
Substituting this formula for $\vec\beta$ in \eqref{eqA2} gives
\begin{equation}\label{eqA3}
  \vec{A}_{*} = \frac{1}{n} \vec{1 1}^\top  + \left(\vec I - \frac{1}{n} \vec{1 1}^\top \right)
\left( \frac{1}{a} \vec{\mu \mu}^\top + \vec\Gamma \left(
\frac{1}{a} \vec\Sigma^{-1}\vec{\mu \mu}^\top
 - \vec I \right) \right) \vec\Sigma^{-1}.  
\end{equation}
Using the constraint $\vec Z \odot \vec{A}_{*} = \vec0$, we observe that
\begin{equation}\label{eqZdot}
\vec Z \odot \left( \frac{1}{n} \vec{1 1}^\top  + \left(\vec I - \frac{1}{n} \vec{1 1}^\top \right)
\left( \frac{1}{a} \vec{\mu \mu}^\top + \vec\Gamma \left(
\frac{1}{a} \vec\Sigma^{-1}\vec{\mu \mu}^\top
 - \vec I \right) \right) \vec\Sigma^{-1} \right) = \vec0.
\end{equation}
 Using the fact that 
 $\vec{Z} = \vec{1} \vec{1}^\top - \vec{W} - \vec{I}$
is an indicator function for the absence of an edge, 
we can write out the equations \eqref{eqZdot} explicitly as
\begin{equation} \label{eq:linearsystem2}
 \left( 
 \left(\vec I - \frac{1}{n} \vec{1 1}^\top \right)
 \left(\vec{\Gamma}
\left(\frac{1}{a} \vec\Sigma^{-1}\vec{\mu \mu}^\top
 - \vec I \right)+  \frac{1}{a} \vec{\mu \mu}^\top\right)   \vec\Sigma^{-1} 
+ \frac{1}{n} \vec{1 1}^\top   
 \right)_{i j}
 = 0
\end{equation}
for all $i \not = j$ such that $\{i,j\} \not \in E$.

\end{proof}

\begin{remark}[Computation of $\vec{\Gamma}$] \label{rmk:computingAstart} 
The linear system of equations \eqref{eq:linearsystem2} that determines the nonzero entries of $\vec{\Gamma} \in \mathbb{R}^{n \times n}$ consists of selected equations of a linear system of the form $\vec{E} \vec{\Gamma} \vec{F} = \vec{G}$ where $\vec{E}, \vec{F}, \vec{G} \in \mathbb{R}^{n \times n}$ are given matrices. 
We can write this linear system in matrix-vector form using tensor algebra notation. In particular, we have
$$
\vec{E} \vec{\Gamma} \vec{F} = \vec{G}
\quad
\iff
\quad
(\vec{F}^\top \otimes \vec{E}) \vectorize(\vec{\Gamma}) = \vectorize(\vec{G}).
$$
By selecting an $m \times m$ principal submatrix of $\vec{F}^\top \otimes \vec{E}$ and the corresponding $m$ elements of $\vectorize(\vec{G})$, where $m$ is the number of ordered pairs $(i,j)$ such that $\{i,j\} \in \overline{E}$, one arrives at a linear system in standard matrix vector form, which can be directly solved using standard methods. Finally, we note that this direct approach to computing $\vec{\Gamma}$ involves constructing an $n^2 \times n^2$ matrix $\vec{F}^\top \otimes \vec{E}$, which may be prohibitive for large $n$. In such cases, the linear system could be solved iteratively instead.
\end{remark}

\section{Summary and Discussion}
\label{sec:discussion}

In this paper, we consider optimal linear actuarially fair P2P risk-sharing on any connected graph without additional restrictions. When a complete graph is considered, our results agree with prior results of Feng, Liu, and Taylor \cite{feng2023peer}. However, the results presented in this paper also apply to arbitrary connected graphs, which enables the restriction that only friends can share risk.

An important feature of the present framework is the use of variance as the risk measure. Variance leads to a quadratic objective function, which makes it possible to derive explicit solutions in Theorems \ref{MainResults} and \ref{thmoptshare}. Other risk measures may be more appropriate in some applications, but they generally lead to different optimization problems. 
In that case, one would not expect closed-form solutions similar to those obtained here, although numerical optimization techniques could still be used.

We emphasize that the optimization problems considered in this paper seek to minimize \emph{aggregate} variance, which does not, in general, imply variance reduction for every individual.
In certain special cases, individual variance reduction does occur; for example, in the i.i.d. complete-graph case, both formulations reduce to ordinary equal pooling, and each agent's variance is reduced from $\sigma^2$ to $\sigma^2/n$. However, this conclusion does not hold automatically for general network structures. Consequently, the resulting allocations should not be interpreted as automatically satisfying individual rationality.

Considering risk sharing rules on networks that impose individual rationality constraints is a natural direction for future work. In practice, agents may require additional conditions before agreeing to participate in a risk-sharing arrangement. For example,  given a fixed risk sharing network structure, one could require that each agent's post-sharing risk does not exceed its pre-sharing risk, or consider other notions of fairness. Another possibility is to consider methods of constructing risk sharing networks that promote a willingness to participate. Incorporating such requirements would lead to a different optimization problem and is left for future work.

We further examine risk-sharing rules with the additional condition that friends of agent $i$ take an equal fractional share of the risk of agent $i$. In this case, the optimal linear risk-sharing rule is related to the graph Laplacian $\vec L$.

We also identify necessary and sufficient conditions for nonnegativity of the entries of the optimal linear risk-sharing matrix for certain cases. For complete networks with no additional restrictions, the mean vector $\vec\mu$ and covariance matrix $\vec\Sigma$ determine nonnegativity, while for any rules that enforce the friends share equal risk assumption, conditions are in terms of the degree of each node in the network, the mean vector, and the covariance matrix.

The theoretical results are illustrated through several examples that demonstrate the versatility and utility of these new approaches to linear P2P risk-sharing rules. Costs and benefits, as measured by the sum of variances after risk-sharing, are discussed.  Examples illustrate that, although negative entries may appear in the optimal linear rule over complete graphs, restricting the optimization by disconnecting nodes associated with negative entries can result in exclusively nonnegative entries. The provided examples and analysis establish several options for risk-sharing networks depending on which properties, such as nonnegativity, equal risk shared between friends, or the lowest possible overall variance after risk-sharing, would be most beneficial to the specific risk pool.

The results in this paper also motivate other questions for further study. First, it may be interesting to consider the connection between network structure and the willingness of agents to join a P2P insurance network. For example, participants may be more willing to join a risk-sharing network if they only share risk with other agents whose losses have expectation and variance within a certain range of their own. The barbell example in Section \ref{sec:barbell} demonstrates a network constructed based on mitigating risk-sharing between agents with extreme differences in expectation, although different assumptions or guidelines for establishing connections in a risk-sharing network may produce different results or require new assumptions to preserve willingness to join. One consideration of interest is preferential attachment, where economic parameters of the agents outside of expectation alone affect how a network is formed. It may also be of interest to extend our network risk-sharing models to consider multiple periods or continuous time rather than the static models examined in this paper.

\bibliographystyle{plain}
\bibliography{refs}

\end{document}